\pgfplotsset{width=12cm,compat=1.9}
\theoremstyle{plain}
\newtheorem{thm}{Theorem}[section]
\newtheorem{prp}[thm]{Proposition}
\newtheorem{lem}[thm]{Lemma}
\newtheorem{cnj}[thm]{Conjecture}
\theoremstyle{definition}
\theoremstyle{remark}
\newtheorem{rmk}[thm]{Remark}
\newtheorem{exa}[thm]{Example}
\newtheorem{que}[thm]{Question}
\newcommand{\CC}{\mathbb{C}}
\newcommand{\NN}{\mathbb{N}}
\newcommand{\QQ}{\mathbb{Q}}
\newcommand{\RR}{\mathbb{R}}
\newcommand{\ZZ}{\mathbb{Z}}
\newcommand{\F}{\mathcal{F}}
\newcommand{\N}{\mathcal{N}}
\renewcommand{\O}{\mathcal{O}}
\newcommand{\ol}[1]{\overline{#1}}
\newcommand{\onto}{\twoheadrightarrow}
\DeclarePairedDelimiter\set\{\}
\DeclarePairedDelimiter\abs\lvert\rvert 
\DeclarePairedDelimiter\ideal\langle\rangle
\DeclareMathOperator{\gr}{gr}
\DeclareMathOperator{\Sp}{Sp}
\title[Limit spectral distribution]{Limit spectral distribution\\ for non-degenerate\\ hypersurface singularities}
\author[P.~Almir\'{o}n]{Patricio Almir\'{o}n}
\address{\linebreak
Patricio Almirón\\
Instituto de Matemática Interdisciplinar (IMI), Departamento de \'{A}lgebra, Geometr\'{i}a y Topolog\'{i}a\\
Facultad de Ciencias Matem\'{a}ticas\\
Universidad Complutense de Madrid\\
28040, Madrid, Spain.
}
\email{\href{palmiron@ucm.es}{palmiron@ucm.es}}
\author[M.~Schulze]{Mathias Schulze}
\address{\linebreak
Mathias Schulze\\
Department of Mathematics, TU Kaiserslautern\\
67663 Kaiserslautern\\
Germany
}
\email{\href{mschulze@mathematik.uni-kl.de}{mschulze@mathematik.uni-kl.de}}
\subjclass[2010]{Primary 32S25; Secondary 32S35, 42A38}
\keywords{hypersurface, singularity, mixed Hodge, log canonical}
\thanks{PA is supported by Spanish Ministerio de Ciencia, Innovaci\'{o}n y Universidades MTM2016-76868-C2-1-P and PID2020-114750GB-C32}
\numberwithin{equation}{section}
\begin{document}
%%%%%%%%%%%%%%%%%%%%%%%%%%%%%%%%%%%%%%%%%%%%%%%%%%%%%%%%%%%%%%%%%%%%%%%%%%%%%%%%

\begin{abstract}
We establish Kyoji Saito's continuous limit distribution for the spectrum of Newton non-degenerate hypersurface singularities.
Investigating Saito's notion of dominant value in the case of irreducible plane curve singularities, we find that the log canonical threshold is strictly bounded below by the doubled inverse of the Milnor number.
We show that this bound is asymptotically sharp.
\end{abstract}

\maketitle
%\tableofcontents

%%%%%%%%%%%%%%%%%%%%%%%%%%%%%%%%%%%%%%%%%%%%%%%%%%%%%%%%%%%%%%%%%%%%%%%%%%%%%%%%
%%%%%%%%%%%%%%%%%%%%%%%%%%%%%%%%%%%%%%%%%%%%%%%%%%%%%%%%%%%%%%%%%%%%%%%%%%%%%%%%
\section{Introduction}
%%%%%%%%%%%%%%%%%%%%%%%%%%%%%%%%%%%%%%%%%%%%%%%%%%%%%%%%%%%%%%%%%%%%%%%%%%%%%%%%
%%%%%%%%%%%%%%%%%%%%%%%%%%%%%%%%%%%%%%%%%%%%%%%%%%%%%%%%%%%%%%%%%%%%%%%%%%%%%%%%

Let $f\colon(\CC^{n+1},0)\to(\CC,0)$ be the germ of a holomorphic function with isolated critical point $0$ and Milnor number $\mu$.
Its spectrum is a discrete invariant formed by $\mu$ rational \emph{spectral numbers} (see \cite[\nopp II.8.1]{Kul98})
\[
\alpha_1,\dots,\alpha_\mu\in\QQ\cap(0,n+1).
\]
They are certain logarithms of the eigenvalues of the monodromy on the middle cohomology of the Milnor fibre which correspond to the equivariant Hodge numbers of Steenbrink's mixed Hodge structure.
In the context of Poincar\'e polynomials it is convenient to consider the spectrum as a polynomial
\[
\Sp_f(t):=\sum_{i=1}^\mu t^{\alpha_i}\in\ZZ[\QQ].
\]

K.~Saito~\cite{Sai81} was the first to study the asymptotic distribution of the spectrum.
He considered the normalized spectrum of $f$,
\[
\chi_f(t):=\frac{\Sp_f(T)}{\mu}=\frac1\mu\sum_{j=1}^\mu T^{\alpha_j},\quad T=\exp(2\pi it),
\]
as the Fourier transform of the discrete probability density on the interval \((0,n+1)\),
\[
\frac{1}{\mu}\sum_{i=1}^{\mu}\delta(s-\alpha_i)ds,
\]
where \(\delta(s)\) is Dirac's delta function.
In the case of Brieskorn--Pham singularities (see Example~\ref{5}) he identified a continuous limit probability distribution $N_{n+1}$ defined by
\[
N_{n+1}(s)ds:=\int_{x_0+\cdots x_n=s}\varphi(x_0)\cdots\varphi(x_n)dx_0\cdots dx_n,
\]
where $\varphi$ is the indicator function of the unit interval $[0,1]$, 
\[
\varphi(x):=
\begin{cases}
1 &\text{if } x\in[0,1],\\
0 &\text{if } x\notin[0,1].
\end{cases}
\]
Under the Fourier transform $\F$, $N_{n+1}$ corresponds to the power
\begin{equation}\label{7}
\F(N_{n+1})=\F(\varphi)^{n+1}.
\end{equation}

%%%%%%%%%%%%%%%%%%%%%%%%%%%%%%%%%%%%%%%%%%%%%%%%%%%%%%%%%%%%%%%%%%%%%%%%%%%%%%%%

K.~Saito~\cite[(2.5)\,i)]{Sai81} suggested to find singularities for which $\chi_f$ converges to $N_{n+1}$.
Our main result establishes his limit spectral distribution for Newton non-degenerate singularities.

%%%%%%%%%%%%%%%%%%%%%%%%%%%%%%%%%%%%%%%%%%%%%%%%%%%%%%%%%%%%%%%%%%%%%%%%%%%%%%%%

\begin{thm}\label{0}
For a fixed Newton diagram $\Gamma$, consider the Newton diagrams $\varpi\Gamma$ obtained from $\Gamma$ by scaling with the factor $\varpi$.
Then we have
\begin{equation}\label{17}
\lim_{\varpi\to\infty}\chi_{f_{\varpi}}=\F(N_{n+1}),
\end{equation}
where $f_{\varpi}$ is any Newton non-degenerate function germ of $n+1$ variables with Newton diagram $\varpi\Gamma$.
\end{thm}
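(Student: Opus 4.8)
The plan is to use the combinatorial description of the spectrum of a Newton non-degenerate singularity in terms of the Newton polyhedron, due to Steenbrink (with refinements by Saito, Varchenko, Khovanskii--Varchenko, and others). Recall that for a Newton non-degenerate $f$ with Newton diagram $\Gamma$, the spectrum of $f$ can be computed as a sum, over the cones in a suitable simplicial subdivision of $\RR^{n+1}_{\geq 0}$ compatible with $\Gamma$, of local contributions that depend only on $\Gamma$; equivalently, the generating function $\Sp_f(t)$ is obtained by taking, for each lattice point $m$ in the positive orthant, the weight $t^{\ell_\Gamma(m)}$ where $\ell_\Gamma$ is the Newton distance function (the piecewise-linear function that is $1$ on the diagram $\Gamma$ and linear on each cone over a facet), and then applying an inclusion--exclusion/Koszul correction coming from the coordinate hyperplanes. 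Concretely, one has
\begin{equation}\label{eq:spec-newton}
\Sp_f(t)=\prod_{i=0}^{n}(1-t^{?})\cdot\Bigl(\sum_{m\in\NN^{n+1}}t^{\ell_\Gamma(m)}\Bigr)
\end{equation}
up to the standard normalization shifting each exponent by $1$ per coordinate; I would begin by recalling the precise statement and fixing notation for $\ell_\Gamma$ and the subdivision.

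The key analytic step is the scaling behaviour. Under $\Gamma\mapsto\varpi\Gamma$ the Newton distance function scales as $\ell_{\varpi\Gamma}(m)=\ell_\Gamma(m)/\varpi$, so the lattice sum over the cone on a facet $\sigma$ becomes a Riemann sum: $\frac1{\varpi^{n+1}}\sum_{m}$ of a function of $m/\varpi$ converges, after multiplying by $\mu\sim c\,\varpi^{n+1}$ (the Milnor number grows like the normalized volume of the Newton polyhedron, hence like $\varpi^{n+1}$), to the Lebesgue integral over the cone of $t^{\ell_\Gamma(x)}$, with $\ell_\Gamma$ now the homogeneous-degree-one extension. After passing to the Fourier-transform side $T=\exp(2\pi i t)$ and dividing by $\mu$, the Koszul factors $\prod(1-T^{1/\varpi})$ — once renormalized — produce exactly $n+1$ copies of $\F(\varphi)$: indeed $\frac{1-T^{1/\varpi}}{\text{(something of order }1/\varpi)}\to -2\pi i\,(\text{the symbol of }\varphi)$ in the appropriate sense, because $\F(\varphi)(t)=\frac{e^{2\pi i t}-1}{2\pi i t}$ is precisely the Fourier transform of the indicator of $[0,1]$. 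So the product structure \eqref{7} emerges directly from the product of the $n+1$ coordinate Koszul factors, and the convergence of the lattice sum to a volume integral over the cone on $\Gamma$ supplies the correct overall shape.

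Assembling this, I would: (i) state the Newton-combinatorial spectrum formula and the asymptotics $\mu(\varpi\Gamma)/\varpi^{n+1}\to V$; (ii) show pointwise in $t$ (or uniformly on compacta, which is what matters for the distributional/Fourier-transform statement) that $\frac1{\varpi^{n+1}}\sum_{m\in\NN^{n+1}}T^{\ell_{\varpi\Gamma}(m)}\to\int_{\RR^{n+1}_{\geq0}}T^{\ell_\Gamma(x)}dx$ by a dominated-convergence Riemann-sum argument, using that $\ell_\Gamma$ is proper and piecewise linear so the integrand decays (for $t$ in the upper half plane; on the real axis one argues via the regularization already implicit in treating $\chi_f$ as a Fourier transform of a density); (iii) incorporate the Koszul prefactor, passing each $1-T^{\,\text{(linear form)}/\varpi}$ to its limiting symbol and recognizing the resulting Fourier-side identity as $\F(\varphi)^{n+1}$, which is independent of $\Gamma$; (iv) check the normalization by dividing by $\mu$ and matching constants, so that the limit is a probability density (its total mass is $1$ because $N_{n+1}$ is), which also pins down the volume constant $V$ and shows it cancels. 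The main obstacle I anticipate is step (ii)–(iii) done carefully: controlling the lattice sum near the coordinate hyperplanes, where $\ell_\Gamma$ can blow up and where the Koszul corrections are concentrated, and making the interchange of the limit $\varpi\to\infty$ with the Fourier transform rigorous — this is exactly where Saito's device of viewing $\chi_f$ as the Fourier transform of the spectral density, rather than manipulating $\Sp_f(t)$ directly, becomes essential, since it converts the delicate boundary behaviour into a clean statement of weak convergence of probability measures on the compact interval $(0,n+1)$.
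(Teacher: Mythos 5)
Your overall architecture --- combinatorial spectrum formula, scaling turning lattice data into volumes, Kouchnirenko asymptotics $\mu_{f_\varpi}\sim(n+1)!\,V_{n+1}\varpi^{n+1}$ for the normalization, and identification of the limit with $\F(\varphi)^{n+1}$ --- is the right skeleton and is essentially the one the paper follows. But as written there are concrete gaps. First, your starting formula for $\Sp_f$ is not correct: for Newton non-degenerate $f$ the principal parts $F_0,\dots,F_n$ do \emph{not} in general form a regular sequence on the whole Newton-graded ring, so $\Sp_f$ is not a single global Koszul factor times $\sum_{m\in\NN^{n+1}}t^{\ell_\Gamma(m)}$; Steenbrink's formula (Theorem~\ref{3}) carries face-by-face corrections with exponents $(1-t)^{k(\sigma)}$ along coordinate subspaces, and these are exactly what produce the lower-order terms $-n!V_n+\cdots$ in Kouchnirenko's formula (Theorem~\ref{4}). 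Your plan tacitly assumes these corrections can be discarded as ``lower order,'' but at real $t$ the individual face contributions are divergent oscillatory series (only the assembled finite sum $\chi_{f_\varpi}$ is unproblematic), so dismissing them requires the same regularization care as the main term and is not automatic. Second, the mechanism you give for the appearance of $\F(\varphi)^{n+1}$ is misidentified: with the diagram normalized at level $1$ the Koszul degrees are $1$ for every $\varpi$, so the numerator is $(1-T)^{n+1}$, not $\prod(1-T^{1/\varpi})$; in your Riemann-sum route the power $\F(\varphi)^{n+1}$ would instead come from combining $(1-T)^{n+1}$ with the regularized identity $\int_{\RR^{n+1}_{\ge0}}e^{2\pi it\ell_\Gamma(x)}dx = (n+1)!\,V_{n+1}/(-2\pi it)^{n+1}$, which you would still have to prove, and only for $\Im t>0$, after which the passage back to real $t$ (Vitali/L\'evy-type argument using that all spectral measures live on $[0,n+1]$) is gestured at but not supplied.

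The paper avoids all of these analytic issues by a different device: it first refines the fan of $\Gamma$ to a \emph{regular} subdivision (Theorem~\ref{8}), so that every cone contributes an explicit finite product of geometric series $\prod_j(1-t^{\varepsilon w^\tau_j})^{-1}$ in Steenbrink's formula. Each summand is then a closed-form expression whose limit as $\varpi\to\infty$ is computed termwise by the elementary Lemma~\ref{9} ($\lim_{w\to0}w\frac{1-T}{1-T^w}=\F(\varphi)(t)$): top-dimensional cones contribute $\frac{V(\tau)}{V_{n+1}}\F(N_{n+1})$, lower-dimensional cones (including all the coordinate-subspace corrections you would need to control) visibly contribute $0$ after dividing by $\mu_{f_\varpi}$, and the weights sum to $1$ by additivity of volume. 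If you want to pursue your Riemann-sum route, the missing ingredients are: the correct Steenbrink decomposition in place of your equation for $\Sp_f$, a regularized treatment of each face term (not just the top one), and an explicit argument transferring upper-half-plane convergence to the real-axis statement \eqref{17}.
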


%%%%%%%%%%%%%%%%%%%%%%%%%%%%%%%%%%%%%%%%%%%%%%%%%%%%%%%%%%%%%%%%%%%%%%%%%%%%%%%%

The proof is given in \S\ref{33}.
Combined with the following remark, our result generates new cases where Saito's limit distribution is valid for a suitably chosen limit as in \eqref{17}.
The general choice of limit is unclear.

\begin{rmk}\label{11}
K.~Saito~\cite[(3.7), (3.9), (3.2.1)]{Sai81} proved the following facts.
\begin{enumerate}[(a)]
\item\label{11a} For quasihomogeneous $f$ of degree $1$ with respect to weights $w_0,\dots,w_n$, \eqref{17} holds, even with $\lim_{\varpi\to\infty}$ replaced by $\lim_{w_0,\dots,w_n\to0}$.
\item\label{11b} For irreducible plane curve singularities $f$ with Puiseux pairs $(n_1,l_1),\dots,(n_g,l_g)$,
\[
\lim_{n_g\to\infty}\chi_f=\F(N_2).
\]
\item\label{11c} The join $f+g$ of two functions in disjoint sets of variables satisfies
\[
\chi_{f+g}=\chi_f\cdot\chi_g.
\]
Therefore \eqref{17} is compatible with joins by \eqref{7}.
\end{enumerate}
\end{rmk}

%%%%%%%%%%%%%%%%%%%%%%%%%%%%%%%%%%%%%%%%%%%%%%%%%%%%%%%%%%%%%%%%%%%%%%%%%%%%%%%%

K.~Saito~\cite[(2.5)\,ii), (2.8)\,i)]{Sai81} further suggested to describe up to what extent the spectral distribution is bounded by $N_{n+1}$ and introduced the notion of \emph{(weakly) dominating values}.
Consider the function
\[
\Phi_f\colon[0,1]\to\RR,\quad
r\mapsto\int_{0}^{r}N_{n+1}(s)-\frac1\mu\sum_{i=1}^{\mu}\delta(s-\alpha_i)ds
\]
defined by the difference of the continuous and discrete spectral distributions.
By definition $0<r<\frac{n+1}2$ is a \emph{dominating value} if $\Phi_f(r)>0$ for all $f$ in $n+1$ variables.
A \emph{weakly dominating value} is defined by replacing $<$ by $\leq$ and $\int_{0}^{r}$ by $\int_{0}^{r-\epsilon}$ for all $\epsilon>0$.
In particular, K.~Saito~\cite[(2.8)\,iv)]{Sai81} formulated the following question which by work of M.~Saito~\cite{Sai83} extends Durfee's conjecture on the \emph{geometric genus}
\begin{equation}\label{24}
p_g=\abs{\set{i\mid \alpha_i\le 1}}
\end{equation}
from the case of surface singularities.

%%%%%%%%%%%%%%%%%%%%%%%%%%%%%%%%%%%%%%%%%%%%%%%%%%%%%%%%%%%%%%%%%%%%%%%%%%%%%%%%

\begin{que}\label{21}
Is $1$ a dominating value for all $n\geq 2$? In other words, for $f$ in $n+1$ variables, is the geometric genus bounded by 
\[
p_g<\frac{\mu}{(n+1)!}?
\]
\end{que}

Kerner and Nemethi~\cite{KN17} give a positive answer for Newton non-degenerate singularities with Newton diagram $\varpi\Gamma$ for sufficiently large $\varpi$.

%%%%%%%%%%%%%%%%%%%%%%%%%%%%%%%%%%%%%%%%%%%%%%%%%%%%%%%%%%%%%%%%%%%%%%%%%%%%%%%%

As opposed to Question~\ref{21}, Hertling's variance conjecture \cite[Conj.~6.7]{Her01} addresses the distribution of the spectrum in its full range.

\begin{cnj}[Hertling's Variance Conjecture]\label{20}
The variance of spectral numbers $\alpha_1\le\cdots\le\alpha_\mu$ is bounded by
\[
\frac{1}{\mu}\sum_{i=1}^{\mu}\left(\alpha_i-\frac{n}{2}\right)^2\leq\frac{\alpha_\mu-\alpha_1}{12}.
\]
\end{cnj}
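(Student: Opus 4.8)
Hertling's Variance Conjecture is open in general, and no presently available method --- ours included --- settles it unconditionally. What our results make possible is a three-step strategy that proves it in the scaling regime of Theorem~\ref{0} and reduces the general statement to join-indecomposable germs: (i) a reduction along Thom--Sebastiani joins; (ii) the limit distribution of Theorem~\ref{0} for scaled Newton diagrams; and (iii) a Newton-polyhedron estimate for the remaining finite correction.

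\textbf{(i) Reduction along joins.} Let $\operatorname{Var}_f$ denote the variance of the spectrum of $f$, that is, the left-hand side of Conjecture~\ref{20}. By Remark~\ref{11}\eqref{11c} one has $\Sp_{f+g}(t)=\Sp_f(t)\Sp_g(t)$ in $\ZZ[\QQ]$, so the spectrum of the join $f+g$ is the multiset $\set{\alpha_i+\beta_j}$ carrying the product weights. The two families then contribute independently, so $\operatorname{Var}_{f+g}=\operatorname{Var}_f+\operatorname{Var}_g$; and $\alpha_\mu-\alpha_1$ is additive as well, since the extreme values of a sum-set are the sums of the extreme values of the summands. Hence \emph{both} sides of the conjectured inequality are additive under joins, so the class of germs for which Conjecture~\ref{20} holds is closed under $+$. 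Combined with the cases already settled --- notably quasihomogeneous germs \cite{Her01}, where the bound is in fact an equality (for instance the equality for the germs $x^k$ propagates to all Brieskorn--Pham germs by this very additivity), and irreducible plane curve germs by work of M.~Saito --- this leaves only join-indecomposable germs lying outside these classes.

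\textbf{(ii) The scaled Newton non-degenerate case.} Let $f_\varpi$ be Newton non-degenerate with Newton diagram $\varpi\Gamma$. By Theorem~\ref{0}, $\chi_{f_\varpi}\to\F(N_{n+1})$; equivalently, the normalized spectral measures converge weakly to $N_{n+1}$, the density of a sum of $n+1$ independent variables uniform on $[0,1]$. As all these measures are supported on the fixed interval $(0,n+1)$, weak convergence upgrades to convergence of moments, so $\operatorname{Var}_{f_\varpi}\to\frac{n+1}{12}$. On the other side, Steenbrink symmetry together with the identification of the least spectral number with $\operatorname{lct}(f_\varpi)$ (valid once $\varpi$ is large) gives $\alpha_\mu-\alpha_1=(n+1)-2\operatorname{lct}(f_\varpi)$, and $\operatorname{lct}(f_\varpi)$ is the reciprocal of the Newton distance of $\varpi\Gamma$, which tends to $0$; hence $\tfrac1{12}(\alpha_\mu-\alpha_1)\to\frac{n+1}{12}$ as well. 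Thus the conjectured bound degenerates to an equality in the limit: it is asymptotically sharp --- compare Remark~\ref{11}\eqref{11b} for the same degeneration along irreducible plane curves as $n_g\to\infty$ --- and weak convergence by itself decides nothing for finite $\varpi$. To conclude one must pin down the sign of
\[
\frac{(n+1)-2\operatorname{lct}(f_\varpi)}{12}-\operatorname{Var}_{f_\varpi},
\]
whose leading term vanishes by the above. I would do this by inserting Varchenko's combinatorial expression for the spectrum of $f_\varpi$ (hence for $\tfrac1\mu\sum_i\alpha_i^2$) and the Newton-distance formula for $\operatorname{lct}(f_\varpi)$, both as explicit functions of the Newton polyhedron of $\varpi\Gamma$, and comparing the two sides to the next order in $\varpi^{-1}$.

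\textbf{Main obstacle.} Two difficulties arise, of quite different nature. The first --- the finite correction in (ii) --- is essentially a \emph{quantitative} refinement of Theorem~\ref{0}: effective rates of convergence for $\chi_{f_\varpi}\to\F(N_{n+1})$ and for $\operatorname{lct}(f_\varpi)\to0$, obtained by careful bookkeeping over the faces of $\varpi\Gamma$. This is technical but I expect it to go through. The second is the genuine obstacle: a join-indecomposable germ that is neither quasihomogeneous nor an irreducible plane curve offers no visible structure relating the spread $\alpha_\mu-\alpha_1$ of its spectrum to the variance, and it is exactly at this point that Conjecture~\ref{20} has resisted every attack so far. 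Absent a new idea there, the plan above yields the scaled Newton non-degenerate case and the join reduction, but not the conjecture in full.
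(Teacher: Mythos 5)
This item is a \emph{conjecture}, not a theorem of the paper: the authors state Hertling's Variance Conjecture as an open problem and offer no proof, noting only that Br\'elivet confirmed it for Newton non-degenerate singularities and for plane curves. You correctly recognize this, and your refusal to manufacture a proof is the right call. Your supporting observations are essentially sound: additivity of both sides under Thom--Sebastiani joins (using $\Sp_{f+g}=\Sp_f\cdot\Sp_g$ from Remark~\ref{11}\eqref{11c}), and the degeneration of the inequality to the equality $\tfrac{n+1}{12}=\tfrac{n+1}{12}$ in the scaling limit of Theorem~\ref{0}, since weak convergence of compactly supported measures gives convergence of second moments and the log canonical threshold tends to $0$.

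Three small corrections. First, your additivity of the left-hand side requires the reference point to be the mean of the spectrum; in the paper's normalization (spectrum in $(0,n+1)$, symmetric about $\tfrac{n+1}{2}$) the centers then add correctly under joins, whereas the literal $\tfrac n2$ in the displayed formula does not --- this is a normalization slip in the paper rather than in your argument, but you should flag which center you are using. Second, the paper attributes the plane-curve and Newton non-degenerate cases to Br\'elivet, not to M.~Saito. Third, the Newton non-degenerate case you sketch in step (ii) is therefore already a theorem in the literature, so that step is a known result rather than a new reduction; the genuine obstacle you identify --- join-indecomposable germs outside the known classes --- is indeed where the conjecture remains open.
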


It was confirmed by Br\'elivet~\cite{Bre02,Bre04} for Newton non-degenerate singularities and for plane curves.
We refer to Br\'elivet and Hertling~\cite{HB20} for refined investigations in this direction.

%%%%%%%%%%%%%%%%%%%%%%%%%%%%%%%%%%%%%%%%%%%%%%%%%%%%%%%%%%%%%%%%%%%%%%%%%%%%%%%%

In \S\ref{22}, we investigate (the extremal) spectral numbers below $1$ for their dominance in the case $n=1$ of irreducible plane curve singularities $C=f^{-1}(0)$.
For a single \emph{Puiseux pair} $(p,q)$ we describe these spectral values in terms of the \emph{value semigroup} $S=\ideal{p,q}$ of $C$ (see \eqref{26}).
This can be used to visualize the graph of $\Phi_f$ as a difference (see Figure~\ref{30}).

%%%%%%%%%%%%%%%%%%%%%%%%%%%%%%%%%%%%%%%%%%%%%%%%%%%%%%%%%%%%%%%%%%%%%%%%%%%%%%%%

\begin{figure}[ht]\label{30}
\begin{tikzpicture}[scale=1]
\begin{axis}[axis lines=left,xmajorgrids,ymajorgrids,ytick distance=0.5,xtick distance=1]
\addplot [const plot mark left,color=blue,mark=*,mark size=1pt,fill=white,line width=1pt] 
table [x expr=(\thisrow{X}+14)/45, y expr=\thisrow{Y}/32] {
X Y 
-14 0
0 1
5 2
9 3
10 4
14 5
15 6
18 7
19 8
20 9
23 10
24 11
25 12
27 13
28 14
29 15
30 16
32 17 
33 18
34 19
35 20
36 21
};
\addplot [domain=0:1.15,color=red,line width=1pt] {x^2/2};
\end{axis}
\end{tikzpicture}
\caption{The function $\Phi_f$ as a difference for $S=\ideal{5,9}$.}
\end{figure}

%%%%%%%%%%%%%%%%%%%%%%%%%%%%%%%%%%%%%%%%%%%%%%%%%%%%%%%%%%%%%%%%%%%%%%%%%%%%%%%%

One can write the smallest spectral number, the \emph{log canonical threshold}, as $\frac1p+\frac1q$ and the largest below $1$ as $1-\frac1{pq}$.
For these extremal spectral numbers we prove the following

\begin{prp}\label{18}
If $f(z_0,z_1)$ has a single Puiseux pair $(p,q)$, then
\begin{enumerate}[(a)]
\item\label{18a} $\Phi_f(\frac1p+\frac1q)>0$ unless $p=2$ and $q\in\set{3,5}$, with $\lim_{p\rightarrow \infty}\Phi_f(\frac1p+\frac1q)=0$, 
\item\label{18b} $\Phi_f(1-\frac1{pq})<0$ with $\lim_{p\rightarrow \infty}\Phi_f(1-\frac1{pq})=0$.
\end{enumerate} 
\end{prp}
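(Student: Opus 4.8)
The plan is to carry everything out explicitly from the known spectrum of a single Puiseux pair germ. Recall (\S\ref{22}, and the discussion preceding the proposition) that $f$ has Milnor number $\mu=(p-1)(q-1)$ and the spectrum $\{\frac iq+\frac jp : 1\le i\le q-1,\ 1\le j\le p-1\}$ of the Brieskorn--Pham germ $z_0^q+z_1^p$; in particular the spectral numbers are pairwise distinct since $\gcd(p,q)=1$, the smallest is $\alpha_1=\frac1p+\frac1q$, the value $1$ does not occur (as $ip+jq=pq$ is impossible in this range), and the largest spectral number below $1$ is $1-\frac1{pq}$. From the definition of $N_2$ one has $N_2(s)=\mathrm{length}\bigl([0,1]\cap[s-1,s]\bigr)=s$ for $s\in[0,1]$, hence $\int_0^rN_2(s)\,ds=\frac{r^2}{2}$ on $[0,1]$ (the red parabola in Figure~\ref{30}). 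Therefore $\Phi_f(r)=\frac{r^2}{2}-\frac1\mu\,\#\{i : \alpha_i\le r\}$, and the proof reduces to counting spectral numbers at the two prescribed values of $r$ and estimating.

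For \ref{18a}, since $\alpha_1$ has multiplicity one we obtain $\Phi_f(\alpha_1)=\frac12\bigl(\frac1p+\frac1q\bigr)^2-\frac1{(p-1)(q-1)}$, whose sign is that of $(p-1)(q-1)(p+q)^2-2p^2q^2$. Taking $p<q$ as the Puiseux pair convention permits, I would split into two cases. If $p\ge 3$, then $q\ge 4$, so $\frac{(p-1)(q-1)}{pq}\ge\frac23\cdot\frac34=\frac12$ while $(p+q)^2=4pq+(p-q)^2>4pq$; hence $(p-1)(q-1)(p+q)^2=pq\cdot\frac{(p-1)(q-1)}{pq}\cdot(p+q)^2>pq\cdot\frac12\cdot 4pq=2p^2q^2$, so $\Phi_f(\alpha_1)>0$ with no exceptions here. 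If $p=2$ (so $q$ is odd), the inequality becomes the cubic $q^3-5q^2-4>0$, which is negative at $q=3$ and $q=5$ and, being increasing for $q\ge 4$ and equal to $32$ at $q=6$, positive for all integers $q\ge 6$; thus the only exceptions are $(p,q)\in\{(2,3),(2,5)\}$. For the limit, $\Phi_f(\alpha_1)$ lies between $-\frac1{(p-1)(q-1)}$ and $\frac12\bigl(\frac1p+\frac1q\bigr)^2$, and both bounds tend to $0$ as $p\to\infty$ (whence $q\to\infty$ as well), so $\lim_{p\to\infty}\Phi_f(\alpha_1)=0$.

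For \ref{18b}, because $1-\frac1{pq}$ is the largest spectral number below $1$ and $1$ is not a spectral number, $\#\{i : \alpha_i\le 1-\frac1{pq}\}=\#\{i:\alpha_i<1\}=p_g$; the symmetry $\alpha_i\mapsto 2-\alpha_i$ of the spectrum, which has no fixed point among the $\alpha_i$, forces $p_g=\mu/2$. Therefore
\[
\Phi_f\!\left(1-\tfrac1{pq}\right)=\frac12\!\left(1-\tfrac1{pq}\right)^{2}-\frac12=\frac12\!\left(\tfrac1{p^2q^2}-\tfrac2{pq}\right)=\frac{1-2pq}{2p^2q^2}<0,
\]
and $\bigl|\Phi_f(1-\tfrac1{pq})\bigr|=\frac{2pq-1}{2p^2q^2}<\frac1{pq}\to0$ as $p\to\infty$, giving the claimed limit.

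The computations are routine; the one delicate point is the finite analysis in \ref{18a}, where the bound $\frac{(p-1)(q-1)}{pq}\ge\frac12$ is only just true — equality is forced at $(p,q)=(3,4)$, so one genuinely needs the strict inequality $(p+q)^2>4pq$ — and the border row $p=2$ must be isolated and its small values of $q$ checked directly. One should also keep in mind the convention that a spectral number $\alpha_i=r$ contributes with full weight $1$ to $\#\{i:\alpha_i\le r\}$ in $\Phi_f(r)$; this is precisely what makes the exceptional list $\{(2,3),(2,5)\}$ in \ref{18a} correct.
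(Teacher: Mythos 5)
Your proof is correct and takes essentially the same route as the paper: you evaluate $\Phi_f(r)=\frac{r^2}{2}-\frac1\mu\,\#\{i:\alpha_i\le r\}$ at the two points using $\mu=(p-1)(q-1)$ and $p_g=\mu/2$, reduce \eqref{18a} to the sign of $(p-1)(q-1)(p+q)^2-2p^2q^2$, and settle it with the same $p=2$ cubic $q^3-5q^2-4$ (your $p\ge3$ bound via $\frac{(p-1)(q-1)}{pq}\ge\frac12$ and $(p+q)^2>4pq$ is just a slicker packaging of the paper's cases $p\ge4,q\ge5$ and $p=3$). Incidentally, in \eqref{18b} your value $\frac{1-2pq}{2p^2q^2}$ is the correct simplification of $\frac12\left(1-\frac1{pq}\right)^2-\frac12$ (the paper's displayed $-\frac1{2d^2}$ is a harmless slip), and your squeeze argument for the limits in \eqref{18a} is in fact cleaner than the paper's remark, with all sign and limit conclusions unchanged.
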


%%%%%%%%%%%%%%%%%%%%%%%%%%%%%%%%%%%%%%%%%%%%%%%%%%%%%%%%%%%%%%%%%%%%%%%%%%%%%%%%

For general irreducible plane curve singularities, Igusa~\cite[Thm.~1]{Igu77} showed that the log canonical threshold depends only on multiplicity and the first Puiseux exponent (see also \cite[Proof of Thm.~1.1]{Kuw99}). 
It thus equals $\frac1{\ol\beta_0}+\frac1{\ol\beta_1}$ where $\ol\beta_0,\ol\beta_1$ are the two smallest minimal generators of the value semigroup.
The statement of Proposition~\ref{18}.\eqref{18a} remains valid in this extended generality.

\begin{thm}\label{34}
For any irreducible plane curve singularity $C=f^{-1}(0)$ with value semigroup different from $\ideal{2,3}$ and $\ideal{2,5}$, we have $\Phi_f(\frac1{\ol\beta_0}+\frac1{\ol\beta_1})>0$.
In other words, the squared log canonical threshold is bounded by
\[
\left(\frac1{\ol\beta_0}+\frac1{\ol\beta_1}\right)^2>\frac2\mu.
\]
Moreover, $\lim_{n_g\rightarrow \infty}\Phi_f(\frac{1}{\ol\beta_{0}}+\frac{1}{\ol\beta_{1}})=0$.
\end{thm}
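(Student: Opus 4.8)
The plan is to convert the statement into a clean inequality between $\mu$ and the two smallest generators $\ol\beta_0<\ol\beta_1$ of the value semigroup. By the reformulation given in the statement it suffices to prove $\mu\lambda^2>2$, where I write $\lambda=\tfrac1{\ol\beta_0}+\tfrac1{\ol\beta_1}$ for the log canonical threshold; since $\ol\beta_0\ge2$ and $\ol\beta_1\ge3$ we have $\lambda\le\tfrac56<1$, so that $\int_0^\lambda N_2(s)\,ds=\lambda^2/2$ and $\Phi_f(\lambda)=\lambda^2/2-\tfrac1\mu\abs{\set{i\mid\alpha_i\le\lambda}}<\lambda^2/2$. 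I will use the classical numerical data of $S=\ideal{\ol\beta_0,\dots,\ol\beta_g}$: with $e_i=\gcd(\ol\beta_0,\dots,\ol\beta_i)$, $e_0=\ol\beta_0$, $e_g=1$, and $n_i=e_{i-1}/e_i\ge2$, one has $\ol\beta_0=n_1\cdots n_g$, the growth inequalities $\ol\beta_{i+1}>n_i\ol\beta_i$, and the well-known formula $\mu=\sum_{i=1}^g(n_i-1)\ol\beta_i-\ol\beta_0+1$ for the Milnor number (the conductor of the branch).

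The key step is the lower bound $\mu\ge(\ol\beta_0-1)(\ol\beta_1-1)$, with equality exactly for $g=1$. I would prove it by iterating $\ol\beta_{i+1}>n_i\ol\beta_i$ together with $n_i=e_{i-1}/e_i$ to get $\ol\beta_i\ge\ol\beta_0\ol\beta_1/e_{i-1}$ for all $i\ge1$ (an equality only for $i=1$), plugging this into the Milnor number formula, and evaluating the telescoping sum
\[
\sum_{i=1}^g\frac{n_i-1}{e_{i-1}}=\sum_{i=1}^g\Bigl(\frac1{e_i}-\frac1{e_{i-1}}\Bigr)=1-\frac1{\ol\beta_0},
\]
so that $\sum_{i=1}^g(n_i-1)\ol\beta_i\ge\ol\beta_1(\ol\beta_0-1)$ and hence $\mu\ge\ol\beta_1(\ol\beta_0-1)-\ol\beta_0+1=(\ol\beta_0-1)(\ol\beta_1-1)$.

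I would then distinguish two cases. If $g=1$, then $S=\ideal{\ol\beta_0,\ol\beta_1}$ with $\gcd(\ol\beta_0,\ol\beta_1)=1$, i.e.\ $f$ has a single Puiseux pair $(\ol\beta_0,\ol\beta_1)$, and the assertion $\mu\lambda^2>2$ is exactly Proposition~\ref{18}.\eqref{18a}, whose excluded semigroups $\ideal{2,3}$, $\ideal{2,5}$ are precisely the ones excluded in Theorem~\ref{34}. If $g\ge2$, then $\ol\beta_0=n_1\cdots n_g\ge2^g\ge4$ and $e_1=\gcd(\ol\beta_0,\ol\beta_1)\ge2$, so $(\ol\beta_0,\ol\beta_1)$ is automatically neither $(2,3)$ nor $(2,5)$; here, combining $\mu\ge(\ol\beta_0-1)(\ol\beta_1-1)$ with the elementary inequality
\[
(\ol\beta_0-1)(\ol\beta_1-1)(\ol\beta_0+\ol\beta_1)^2>2\,\ol\beta_0^2\,\ol\beta_1^2\qquad(4\le\ol\beta_0<\ol\beta_1)
\]
yields $\mu\lambda^2>2$. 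I would prove this elementary inequality by fixing $x=\ol\beta_0$ and observing that $y\mapsto(y-1)(x+y)^2/y^2$ is increasing for $y\ge x$ (the numerator $(x+y)(y^2-xy+2x)$ of its derivative is positive), so at $y=\ol\beta_1>x$ it exceeds its value $4(x-1)$ at $y=x$, reducing the claim to $4(x-1)^2>2x^2$, i.e.\ $(x-2)^2>2$, which holds for $x\ge4$. Finally, for the limit assertion: along a family with $n_g\to\infty$ one has $\lambda\le2/\ol\beta_0\le2/n_g\to0$, hence $0<\Phi_f(\lambda)<\lambda^2/2\to0$ by the first part (applicable once $S\ne\ideal{2,3},\ideal{2,5}$), so $\Phi_f(\tfrac1{\ol\beta_0}+\tfrac1{\ol\beta_1})\to0$.

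The step I expect to be the main obstacle is the sharp lower bound $\mu\ge(\ol\beta_0-1)(\ol\beta_1-1)$ together with the bookkeeping that makes this crude estimate suffice precisely in the case $g\ge2$: one must observe that $g\ge2$ forces $\ol\beta_0\ge4$, which is exactly what rescues the elementary inequality where it would otherwise fail (at $(\ol\beta_0,\ol\beta_1)=(2,3)$ and $(2,5)$), so that no exceptional semigroups arise beyond the two single-Puiseux-pair ones already handled by Proposition~\ref{18}.
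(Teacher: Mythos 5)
Your proof is correct, but it takes a genuinely different route from the paper's. The paper's proof splits into $g\ge3$ (estimated via the characteristic-exponent formula \eqref{23} and the chain \eqref{37}) and $g=2$ (via A'Campo's formula \eqref{38}, reducing to Proposition~\ref{18} applied to the reduced pair $(n_1,q_1)=(\ol\beta_0/e_1,\ol\beta_1/e_1)$, which forces a separate explicit computation in the sub-cases $(n_1,q_1)\in\set{(2,3),(2,5)}$ — these are not excluded by the hypothesis, since there $\ol\beta_0\ge4$). You instead establish the single clean bound $\mu\ge(\ol\beta_0-1)(\ol\beta_1-1)$ by telescoping the conductor formula $\mu=\sum_{i=1}^g(n_i-1)\ol\beta_i-\ol\beta_0+1$ (which is equivalent to \eqref{23}) against the iterated growth inequality $\ol\beta_i\ge\ol\beta_0\ol\beta_1/e_{i-1}$, and then dispose of all $g\ge2$ at once with the elementary inequality $(x-1)(y-1)(x+y)^2>2x^2y^2$ for $4\le x<y$, the key bookkeeping point being that $g\ge2$ forces $\ol\beta_0=n_1\cdots n_g\ge4$; the case $g=1$ is Proposition~\ref{18}, exactly as in the paper. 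I checked the telescoping identity $\sum_i(n_i-1)/e_{i-1}=1-1/\ol\beta_0$ and the monotonicity computation (the derivative's numerator $(x+y)(y^2-xy+2x)$ is indeed positive for $y\ge x$); both are right, and your squeeze $0<\Phi_f(\lambda)\le\lambda^2/2\le2/n_g^2$ proves the limit statement uniformly in $g$, whereas the paper only addresses the limit at the end of its exceptional $g=2$ sub-case. What your approach buys is uniformity (no exceptional sub-cases beyond the two semigroups excluded in the statement) plus the explicit intermediate bound $\mu\ge(\ol\beta_0-1)(\ol\beta_1-1)$, exhibiting the single-Puiseux-pair case as extremal; the paper's $g=2$ analysis via \eqref{38} keeps finer track of how the quantity degenerates in the borderline reduced pairs. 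Note that both arguments rest on the same identification $\Phi_f\bigl(\tfrac1{\ol\beta_0}+\tfrac1{\ol\beta_1}\bigr)=\tfrac12\bigl(\tfrac1{\ol\beta_0}+\tfrac1{\ol\beta_1}\bigr)^2-\tfrac1\mu$, i.e.\ that exactly one spectral number lies at or below the log canonical threshold, which you legitimately import from the reformulation in the statement just as the paper does.
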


In particular, Theorem~\ref{34} provides a quite surprising constraint on the first Puiseux pair of an irreducible plane curve singularity with a given Milnor number.

%%%%%%%%%%%%%%%%%%%%%%%%%%%%%%%%%%%%%%%%%%%%%%%%%%%%%%%%%%%%%%%%%%%%%%%%%%%%%%%%
\subsection*{Acknowledgements}
%%%%%%%%%%%%%%%%%%%%%%%%%%%%%%%%%%%%%%%%%%%%%%%%%%%%%%%%%%%%%%%%%%%%%%%%%%%%%%%%

The first named author wants to thank the second named author for his kindness and facilities for hosting him at TU Kaiserslautern in a pleasant working atmosphere during his research stay in September--November 2020 despite the difficulties of travels and face to face work due to the COVID19 pandemic.

%%%%%%%%%%%%%%%%%%%%%%%%%%%%%%%%%%%%%%%%%%%%%%%%%%%%%%%%%%%%%%%%%%%%%%%%%%%%%%%%
%%%%%%%%%%%%%%%%%%%%%%%%%%%%%%%%%%%%%%%%%%%%%%%%%%%%%%%%%%%%%%%%%%%%%%%%%%%%%%%%
\section{Spectrum of non-degenerate singularities}\label{19}
%%%%%%%%%%%%%%%%%%%%%%%%%%%%%%%%%%%%%%%%%%%%%%%%%%%%%%%%%%%%%%%%%%%%%%%%%%%%%%%%
%%%%%%%%%%%%%%%%%%%%%%%%%%%%%%%%%%%%%%%%%%%%%%%%%%%%%%%%%%%%%%%%%%%%%%%%%%%%%%%%

Suppose that $f\colon(\CC^{n+1},0)\to(\CC,0)$ is \emph{Newton non-degenerate}. 
This means that there are local coordinates $z_0,\dots,z_n$ such that 
\[
f=f(z_0,\dots,z_n)\in\CC\{z_0,\dots,z_n\}=\O_{\CC^{n+1},0}=:\O
\] 
is a \emph{Newton non-degenerate convenient} power series (see \cite[1.19~Def.]{Kou76} and \cite[\nopp II.8.5]{Kul98}).
Let \(\Gamma\) denote the Newton diagram of \(f\). 
We write $\sigma\in\Gamma$ to indicate that $\sigma$ is a face of $\Gamma$.
For $\sigma,\tau\in\Gamma$, write \(\tau\leq\sigma\) if \(\tau\) is a face of \(\sigma\).
By $g_\sigma$ we denote the polynomial obtained from the power series $g\in\O$ by restricting the monomial support to the cone of $\sigma$.

%%%%%%%%%%%%%%%%%%%%%%%%%%%%%%%%%%%%%%%%%%%%%%%%%%%%%%%%%%%%%%%%%%%%%%%%%%%%%%%%

There is a (decreasing) Newton filtration $\N$ defined by $\Gamma$ on $\O$.
Following Steenbrink~\cite[(5.6)]{Ste77}, denote the Newton graded ring associated to $\O$ by 
\[
A:=\gr_\N\O
\]
and the principal parts of \(z_0\frac{\partial f}{\partial z_0},\dots,z_n\frac{\partial f}{\partial z_n}\) with respect to $\N$ by \(F_0,\dots,F_n\).
For $\sigma\in\Gamma$ let $A_\sigma$ be the corresponding graded subring of $A$ and denote by 
\[
d(\sigma):=\dim A_\sigma=\dim\QQ\sigma=\dim\sigma+1
\]
its dimension.

%%%%%%%%%%%%%%%%%%%%%%%%%%%%%%%%%%%%%%%%%%%%%%%%%%%%%%%%%%%%%%%%%%%%%%%%%%%%%%%%

The \emph{Brieskorn module} (see \cite{MP83,Sai88})
\[
\Omega_f:=\Omega^{n+1}_{(\CC^{n+1},0)}/df\wedge\Omega^{n}_{(\CC^{n+1},0)}
\]
also carries a Newton filtration which is induced by the inclusion
\begin{equation}\label{2}
\begin{tikzcd}
\Omega^{n+1}_{(\CC^{n+1},0)}=\O dz_0\wedge\cdots\wedge dz_n\ar[twoheadrightarrow]{d}\ar[hookrightarrow]{r}{\frac{z_0\cdots z_n}{dz_0\wedge\cdots\wedge dz_n}} & \O\ar[twoheadrightarrow]{d}\\
\Omega_f\ar[hookrightarrow]{r} & \O\big/\ideal{z_0\frac{\partial f}{\partial z_0},\dots,z_n\frac{\partial f}{\partial z_n}}.
\end{tikzcd}
\end{equation}
M.~Saito~\cite{Sai88} and Varchenko--Khovanski\u{\i}~\cite{VK85} identified the Poincar\'e series of $\Omega_f$ with the singularity spectrum of $f$ defined by Steenbrink~\cite{Ste77}.

%%%%%%%%%%%%%%%%%%%%%%%%%%%%%%%%%%%%%%%%%%%%%%%%%%%%%%%%%%%%%%%%%%%%%%%%%%%%%%%%

\begin{thm}[M.~Saito, Varchenko--Khovanski\u{\i}]\label{1}
For Newton non-degenerate $f$, the Poincar\'e series of the Newton filtered vector space $\Omega_f$ reads 
\[
p_{\Omega_f}(t)=t^{\alpha_1}+\dots+t^{\alpha_\mu}=:\Sp_f(t)
\]
where $\alpha_1,\dots,\alpha_\mu$ are the spectral numbers of $f$.\qed
\end{thm}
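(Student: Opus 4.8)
The plan is to compute the Poincaré series of the Newton filtration on $\Omega_f$ by a graded-algebra argument and then match the outcome with the Newton-polyhedral formula for the spectrum. First I would read off the Newton filtration on $\Omega_f$ from the diagram \eqref{2}: the composite $\Omega^{n+1}_{(\CC^{n+1},0)}\into\O\onto\O\big/\ideal{z_0\frac{\partial f}{\partial z_0},\dots,z_n\frac{\partial f}{\partial z_n}}$ sends $g\,dz_0\wedge\cdots\wedge dz_n$ to the class of $g\,z_0\cdots z_n$ and, as \eqref{2} records, induces an isomorphism of $\Omega_f$ onto the principal ideal generated by the class of $z_0\cdots z_n$, strictly compatible with the Newton filtrations on both sides. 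The point of using the \emph{logarithmic} derivatives $z_i\frac{\partial f}{\partial z_i}$ rather than $\frac{\partial f}{\partial z_i}$ is that $z_i\frac{\partial f}{\partial z_i}$ and $f$ have the same monomial support, so their principal parts $F_0,\dots,F_n$ all have Newton weight exactly $1$ (convenience of $f$ guaranteeing $F_i\neq 0$), which keeps the subsequent bookkeeping uniform.

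Next I would invoke Newton non-degeneracy in Kouchnirenko's sense \cite{Kou76}. Its algebraic payoff, checked face by face through the graded pieces $A_\sigma$, is that $F_0,\dots,F_n$ form a regular sequence in $A=\gr_\N\O$ and that taking principal parts is exact on the ideals in play, so that $\gr_\N\bigl(\O\big/\ideal{z_i\frac{\partial f}{\partial z_i}}\bigr)=A\big/\ideal{F_0,\dots,F_n}$ and the Newton-graded module of $\Omega_f$ is its $z_0\cdots z_n$-component. Exactness of the Koszul complex on $F_0,\dots,F_n$ over $A$ then gives
\[
p_{A/\ideal{F_0,\dots,F_n}}(t)=p_A(t)\prod_{i=0}^{n}\bigl(1-t^{\nu(F_i)}\bigr)=p_A(t)\,(1-t)^{n+1},
\]
and extracting the $z_0\cdots z_n$-component identifies $p_{\Omega_f}(t)$ with $\sum t^{\nu(z^{a}z_0\cdots z_n)}$, the sum running over a monomial basis $\{z^a\}$ of $\gr_\N$ of the Milnor algebra $\O\big/\ideal{\frac{\partial f}{\partial z_0},\dots,\frac{\partial f}{\partial z_n}}$; this is a finite combinatorial expression depending only on $\Gamma$, assembled from the cones $\QQ\sigma$ and dimensions $d(\sigma)$. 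As a consistency check, evaluating at $t=1$ recovers Kouchnirenko's formula $\mu=\mu(\Gamma)$, so the number of exponents, with multiplicity, is already right.

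Finally I would compare this with Steenbrink's description \cite{Ste77} of the spectrum of a Newton non-degenerate singularity, which is likewise a sum over the faces of $\Gamma$ with the contribution of a lattice point $a$ governed by the normalized Newton distance of $a+(1,\dots,1)$; a term-by-term match yields $p_{\Omega_f}(t)=\Sp_f(t)$. The main obstacle is not this bookkeeping but knowing that the \emph{combinatorially} defined Newton filtration on $\Omega_f$ is genuinely the filtration whose jumps are the spectral numbers, rather than one that merely happens to have the same Poincaré series; that is the real content of the theorem. One route, due to Varchenko and Khovanski\u{\i} \cite{VK85}, is analytic: study the asymptotic expansions of the period integrals $\int_\gamma \omega/df$ over vanishing cycles and bound their leading exponents by Newton-polyhedral data, matching multiplicities with the combinatorial count above. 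The other, due to M.~Saito \cite{Sai88}, identifies the Newton filtration with the Hodge filtration induced on the Brieskorn lattice, using the $V$-filtration on the Gauss--Manin system and the theory of mixed Hodge modules. Either way, reconciling these two a priori unrelated filtrations is where essentially all the difficulty lies; the regular-sequence input from non-degeneracy and the toric combinatorics of $p_A(t)$ are routine by comparison.
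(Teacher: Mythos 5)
The paper does not prove Theorem~\ref{1} at all: it is imported, with attribution, from M.~Saito \cite{Sai88} and Varchenko--Khovanski\u{\i} \cite{VK85} (hence the \qed directly after the statement), so there is no internal argument to compare yours with. Your proposal, by its own admission, does the same thing at the decisive point: the identification of the combinatorial Newton filtration on $\Omega_f$ with the filtration whose jumps are the spectral numbers is delegated to exactly those two references, and since that identification \emph{is} the content of the theorem (the paper itself credits it to \cite{VK85,Sai88}, Steenbrink having defined the spectrum and given the combinatorial side), what you have written is a reduction to the literature rather than a proof. That is consistent with how the paper treats the statement, and you do locate the genuine difficulty correctly.

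Two cautions about the scaffolding you add around that citation. First, the Koszul step $p_{A/\ideal{F_0,\dots,F_n}}(t)=p_A(t)(1-t)^{n+1}$ needs $F_0,\dots,F_n$ to be a regular sequence in the glued ring $A=\gr_\N\O$; non-degeneracy together with Hochster gives Cohen--Macaulayness of the individual face rings $A_\sigma$, but passing from that to a statement about $A$ itself is precisely the face-by-face bookkeeping that produces the alternating sums in Theorem~\ref{3}, and it cannot just be asserted. Second, $\gr_\N\Omega_f$ is not $A/\ideal{F_0,\dots,F_n}$ but the image of $z_0\cdots z_n$ in it, and the two have different Poincar\'e series already for $f=z_0^2+z_1^2$ (namely $t$ versus $(1+t^{1/2})^2$); so ``extracting the $z_0\cdots z_n$-component'' carries real content, including the fact that an arbitrary monomial basis of the graded Milnor algebra need not be compatible with the relevant filtration in the non-quasihomogeneous case. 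Finally, the concluding ``term-by-term match with Steenbrink's description of the spectrum'' is circular unless you remember that Steenbrink's combinatorial formula computes $p_{H_f}$, and that $p_{H_f}=\Sp_f$ is exactly what \cite{VK85} and \cite{Sai88} prove; you acknowledge this, but it means the combinatorial detour adds nothing beyond the citation and, as written, contains the gaps above.
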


%%%%%%%%%%%%%%%%%%%%%%%%%%%%%%%%%%%%%%%%%%%%%%%%%%%%%%%%%%%%%%%%%%%%%%%%%%%%%%%%

The inclusion~\eqref{2} identifies
\begin{align}\label{6}
\Omega_f&\cong z_0\cdots z_n\O\big/\ideal{z_0\frac{\partial f}{\partial z_0},\dots,z_n\frac{\partial f}{\partial z_n}},\\
\gr_\N\Omega_f&\cong A/\ideal{F_0,\dots,F_n}=:H_f.\nonumber
\end{align}
Based on results of Kouchnirenko~\cite{Kou76} (and Hochster~\cite{Hoc72}) Steenbrink \cite[(5.7)]{Ste77} gave a formula for for Newton non-degenerate $f$ decomposing $p_{H_f}=\Sp_f$ with respect to faces of the Newton diagram:
For a face $\sigma\in\Gamma$ he first writes the Poincar\'e series of the subspace of $A_\sigma/\ideal{F_{0,\sigma},\dots,F_{n,\sigma}}$ corresponding to the interior of the cone $\QQ_{\geq0}\sigma$ of $\sigma$ as
\[
q_{\sigma}(t)=\sum_{\tau\leq\sigma}(-1)^{d(\sigma)-d(\tau)}(1-t)^{d(\tau)}p_{A_{\sigma}}(t).
\]
Denote the minimal dimension of a coordinate space containing $\sigma\in\Gamma$ by
\[
k(\sigma):=\min\set{k\in\ZZ\mid\exists i_1,\dots,i_k\in\set{0,\dots,n}\colon \sigma\subset\QQ e_{i_1}+\cdots+\QQ e_{i_k}}.
\]
Then Steenbrinks formula is given by

%%%%%%%%%%%%%%%%%%%%%%%%%%%%%%%%%%%%%%%%%%%%%%%%%%%%%%%%%%%%%%%%%%%%%%%%%%%%%%%%

\begin{thm}[Steenbrink]\label{3}
For Newton non-degenerate $f$ in $n+1$ variables, the Poincar\'e series of $H_f$ can be written as
\begin{align*}\pushQED{\qed}
p_{H_f}(t)&=\sum_{\sigma\in\Gamma}(-1)^{n+1-d}(\sigma)(1-t)^{k(\sigma)}p_{A_\sigma}(t)\\
&=\sum_{\tau\leq \sigma\in\Gamma}(-1)^{n+1-d(\sigma)}(1-t)^{k(\sigma)-d(\sigma)}q_{\sigma}(t).\qedhere
\end{align*}
\end{thm}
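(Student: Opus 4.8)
The plan is to compute the Poincaré series of $H_f=A/\ideal{F_0,\dots,F_n}$ by decomposing the combinatorics of $\Gamma$ over its faces, reducing each face to a Koszul computation in a Cohen--Macaulay semigroup ring, and reassembling with the incidence structure of $\Gamma$. First I record how $A=\gr_\N\O$ breaks up along the faces. The Newton filtration is defined by the piecewise-linear function on $\RR^{n+1}_{\geq0}$ which is the minimum of the normalised facet forms of $\Gamma$, hence linear on the cone $\QQ_{\geq0}\sigma$ of each face $\sigma$; so $A$ splits as a graded vector space into the parts supported on the relatively open cones of the faces, and killing all monomials outside $\QQ_{\geq0}\sigma$ is a graded ring surjection $A\onto A_\sigma$, where $A_\sigma$ is the affine semigroup ring of $\QQ_{\geq0}\sigma\cap\ZZ^{n+1}$ graded by that single linear form. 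Since this semigroup is saturated, Hochster's theorem makes $A_\sigma$ Cohen--Macaulay of dimension $d(\sigma)=\dim\sigma+1$. The element $F_i$ is the Newton-principal part of $z_i\frac{\partial f}{\partial z_i}$, which has the same monomial support as the part of $f$ divisible by $z_i$; thus $F_i$ is Newton-homogeneous of degree $1$, and its image $F_{i,\sigma}$ in $A_\sigma$ vanishes unless $z_i$ divides a monomial of $\sigma$, i.e.\ unless $i$ lies in the minimal coordinate subspace containing $\sigma$. Hence at most $k(\sigma)$ of the $F_{0,\sigma},\dots,F_{n,\sigma}$ are nonzero, and $d(\sigma)\leq k(\sigma)$.

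For the face-local step, Newton non-degeneracy says precisely that for each face $\sigma$ the polynomial $f_\sigma$ has no critical point on the torus; translated to $\operatorname{Spec}A_\sigma$ this means $A_\sigma/\ideal{F_{0,\sigma},\dots,F_{n,\sigma}}$ is finite-dimensional, so the $k(\sigma)$ nonzero generators cut out an ideal primary to the homogeneous maximal ideal. As $A_\sigma$ is Cohen--Macaulay of dimension $d(\sigma)$ that ideal has grade $d(\sigma)$, so after replacing the generators by $d(\sigma)$ generic linear combinations one finds a homogeneous (degree-$1$) regular sequence of that length inside it; therefore the Koszul complex $K_\bullet$ on the $k(\sigma)$ nonzero $F_{i,\sigma}$ over $A_\sigma$ has $H_0(K_\bullet)=A_\sigma/\ideal{F_{\bullet,\sigma}}$ and $H_j(K_\bullet)=0$ for $j>k(\sigma)-d(\sigma)$, all $H_j$ of finite length. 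Since each $F_i$ has Newton degree $1$, the module $K_j$ is $\binom{k(\sigma)}{j}$ copies of $A_\sigma$ shifted by $j$, and additivity of Poincaré series on $K_\bullet$ gives
\[
\sum_{j\geq0}(-1)^j\,p_{H_j(K_\bullet)}(t)=(1-t)^{k(\sigma)}p_{A_\sigma}(t).
\]
Extracting a factor $(1-t)^{k(\sigma)-d(\sigma)}$ and abbreviating $q_\sigma(t)=(1-t)^{d(\sigma)}p_{A_\sigma}(t)$ — the Poincaré series of $A_\sigma$ modulo a generic linear system of parameters, which unfolds by inclusion--exclusion over the faces $\tau\leq\sigma$ into the signed sum written in the statement — accounts for both exponents $k(\sigma)$ and $k(\sigma)-d(\sigma)$.

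It remains to glue the faces together, and this is the main obstacle. The surjections $A\onto A_\sigma$ fit into a complex of graded $A$-modules of \v{C}ech/Mayer--Vietoris type, with $p$-th term $\bigoplus_{d(\sigma)=n+1-p}A_\sigma$ and differentials dictated by the incidences among faces of $\Gamma$; it resolves $A$ because in each fixed monomial degree it is the augmented cochain complex of an interval in the polytopal face lattice of $\Gamma$, hence acyclic, and this is the source of the signs $(-1)^{n+1-d(\sigma)}$. Tensoring this resolution with the Koszul complex of $F_0,\dots,F_n$ over $A$ and comparing the two spectral sequences of the resulting double complex — the face-local homology being governed by the previous paragraph — produces a complex of graded vector spaces whose Euler characteristic is $p_{H_f}(t)$ and equals $\sum_{\sigma\in\Gamma}(-1)^{n+1-d(\sigma)}(1-t)^{k(\sigma)}p_{A_\sigma}(t)$; rewriting each summand through $q_\sigma$ yields the second form. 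Alternatively one can run the whole argument by induction on the number of variables, intersecting with a coordinate hyperplane at each step as Kouchnirenko did for the Milnor number and using the compatibility of the Newton filtration with restriction while tracking the $t$-grading.

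The delicate part of the gluing is that the genuinely nonzero higher Koszul homologies $H_j(K_\bullet)$ for $0<j\leq k(\sigma)-d(\sigma)$ — which appear exactly when a face pokes into the interior so that $k(\sigma)>d(\sigma)$ — must feed into the alternating sum over all faces only through their Euler characteristics, cancelling once the cellular differentials are accounted for. Verifying this, together with the exactness of the cellular complex of the $A_\sigma$ and the finiteness needed to equate Euler characteristics, is the real content; it is precisely the refinement of Kouchnirenko's dimension count to one that tracks the Newton grading, made possible by the Cohen--Macaulayness supplied by Hochster, that upgrades the Milnor-number formula to the Poincaré-series statement of Theorem~\ref{3}.
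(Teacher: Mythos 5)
The paper itself gives no argument here: Theorem~\ref{3} is quoted from Steenbrink \cite[(5.7)]{Ste77}, which in turn rests on Kouchnirenko's Th\'eor\`eme~I machinery and Hochster's theorem, so your proposal has to be judged as a reconstruction of that proof. Your face-local analysis is essentially the right one and matches the standard argument: $A_\sigma$ is a Cohen--Macaulay affine semigroup ring of dimension $d(\sigma)$ by Hochster, $F_{i,\sigma}=0$ unless $i$ lies in the minimal coordinate subspace of $\sigma$, so at most $k(\sigma)$ degree-one elements survive, and the graded Euler characteristic of their Koszul complex over $A_\sigma$ is $(1-t)^{k(\sigma)}p_{A_\sigma}(t)$; this is indeed where the exponent $k(\sigma)$ comes from. (Even here, though, the step ``no critical point of $f_\sigma$ on the torus $\Rightarrow$ $A_\sigma/\ideal{F_{0,\sigma},\dots,F_{n,\sigma}}$ is finite dimensional'' is not a tautology: the boundary strata of $\operatorname{Spec}A_\sigma$ correspond to the faces $\tau\le\sigma$, so one needs the non-degeneracy of all those faces, which is one of Kouchnirenko's lemmas.)

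The genuine gap is the gluing step, and it is fatal as written. The complex with $p$-th term $\bigoplus_{d(\sigma)=n+1-p}A_\sigma$ does \emph{not} resolve $A$. In a fixed monomial multidegree $m$ the faces that contribute are exactly those with $m\in\QQ_{\geq0}\sigma$, i.e.\ the star of the unique face $\tau_m$ whose open cone contains $m$, and when $\tau_m$ lies in a proper coordinate subspace this augmented complex cannot be acyclic for any choice of differentials: already for $n+1=2$, $f=z_0^p+z_1^q$, with $\Gamma$ a single edge $\sigma$ and vertices $v_0,v_1$, the degree $(a,0)$, $a>0$, part of $0\to A\to A_\sigma\to A_{v_0}\oplus A_{v_1}\to 0$ has Euler characteristic $1-1+1=1\neq 0$; equivalently, exactness would force $p_A=\sum_{\sigma\in\Gamma}(-1)^{n+1-d(\sigma)}p_{A_\sigma}$, which is false because monomials on coordinate subspaces are miscounted. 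This is not a repairable technicality in your scheme: the discrepancy between $k(\sigma)$ and $d(\sigma)$ and Steenbrink's $q_\sigma$ (the series of the \emph{interior} part of $A_\sigma/\ideal{F_{0,\sigma},\dots,F_{n,\sigma}}$, obtained by inclusion--exclusion over $\tau\le\sigma$) exist precisely to correct for the monomials supported on boundary faces, i.e.\ for the failure of the exactness you assume. Relatedly, your identification $q_\sigma(t)=(1-t)^{d(\sigma)}p_{A_\sigma}(t)$ would make the second displayed line of the theorem a termwise triviality and does not agree with the inclusion--exclusion definition of $q_\sigma$; the passage between the two lines is part of what must be proved. The correct bookkeeping decomposes $A$ as a graded vector space into the pieces supported on the relative interiors of the cones and runs Kouchnirenko's induction over coordinate subspaces (restricting $f$ to coordinate hyperplanes, where it stays convenient and non-degenerate) while tracking the Newton grading; your final sentence gestures at this route, but the argument you actually give hinges on the non-exact ``cellular resolution'' and therefore does not establish the formula.
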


%%%%%%%%%%%%%%%%%%%%%%%%%%%%%%%%%%%%%%%%%%%%%%%%%%%%%%%%%%%%%%%%%%%%%%%%%%%%%%%%
%%%%%%%%%%%%%%%%%%%%%%%%%%%%%%%%%%%%%%%%%%%%%%%%%%%%%%%%%%%%%%%%%%%%%%%%%%%%%%%%
\section{Irreducible plane curve singularities}\label{22}
%%%%%%%%%%%%%%%%%%%%%%%%%%%%%%%%%%%%%%%%%%%%%%%%%%%%%%%%%%%%%%%%%%%%%%%%%%%%%%%%
%%%%%%%%%%%%%%%%%%%%%%%%%%%%%%%%%%%%%%%%%%%%%%%%%%%%%%%%%%%%%%%%%%%%%%%%%%%%%%%%

In this section, we elaborate on the case $n=1$ where $f$ defines an irreducible plane curve singularity $C=f^{-1}(0)$.
We first consider the case of a single Puiseux pair and prove Proposition~\ref{18}, then move on to the general case and prove Theorem~\ref{34}.

Suppose first that $C$ as a single Puiseux pair $(p,q)$.
Then $f$ is Newton non-degenerate with Newton diagram $\Gamma$ consisting of a single line segment $[(p,0),(q,0)]$ and defines an irreducible plane curve singularity $C=f^{-1}(0)$.
The function $f$ is semiquasihomogeneous of weighted degree $1$ with respect to weights
\begin{equation}\label{28}
w_0=\frac1p,\quad w_1=\frac1q,\quad d:=pq,
\end{equation}
on variables $z_0,z_1$ and can be written explicitly as 
\[
f(z_0,z_1)=z_0^p+z_1^q+\sum_{iq+jp>d}a_{i,j}z_0^iz_1^j.
\]
The normalization $\tilde C\onto C$ is given by 
\begin{equation}\label{16}
\begin{tikzcd}
\O_C=\O/\ideal{f}\ar[hookrightarrow]{r} & \O_{\tilde C}=\tilde\O_C\cong\CC\{t\},\\[-20pt]
\ol z_0\ar[mapsto]{r} & t^q+\cdots,\\[-20pt]
\ol z_1\ar[mapsto]{r} & t^p+\cdots.
\end{tikzcd}
\end{equation}
The valuation $\nu\colon\tilde\O_C\to\NN$, $\nu(t)=1$, defines the \emph{value semigroup}
\[
S:=\nu(\O_C\setminus\set0)=\ideal{p,q}\subset\NN.
\]
Due to the finiteness of the normalization $S$ has a finite set of \emph{gaps} $\NN\setminus S$, which yields $k+\NN\subset S$ for $k\gg0$.
The minimal such $k$ is the \emph{conductor} of $S$ and equals the Milnor number (see \cite[Prop.1.2.1.1)]{BG80})
\begin{equation}\label{29}
\mu=(p-1)(q-1).
\end{equation}
The Gorenstein property of $C$ is reflected by the symmetry between elements and gaps (see \cite{Kun70})
\begin{equation}\label{25}
\begin{tikzcd}
S\ar{r}{1:1} & \ZZ\setminus S,\\[-20pt]
a\ar[mapsto]{r} & \mu-1-a.
\end{tikzcd}
\end{equation}
The normalized valuation $\nu/d$ induces the filtration $\O_C$ defined by weights $w=(w_0,w_1)$ on $z_0,z_1$.
By assumption, this is the Newton filtration $\N$.
Factoring \eqref{16} as 
\[
\begin{tikzcd}
\O_C\ar[hookrightarrow]{r}\ar[twoheadrightarrow]{d} & \CC\{t\}\ar[twoheadrightarrow]{d}\\
\O/\ideal{z_0\frac{\partial f}{\partial z_0},z_1\frac{\partial f}{\partial z_1}}\ar{r} & \CC\{t\}/\ideal{t^d}
\end{tikzcd}
\]
and using \eqref{2} yields a Newton filtered inclusion
\[
\begin{tikzcd}
\O/\N_{1+w_0+w_1}\O\ar{r}{dz_0\wedge dz_1}[swap]{\cong} & \Omega_f/\N_{1}\Omega_f\ar[hookrightarrow]{r} & \CC\{t\}/\ideal{t^d}.
\end{tikzcd}
\]
This identifies the corresponding ranges of spectral numbers and of values in the semigroup by means of
\begin{equation}\label{26}
\begin{tikzcd}
\set{\alpha\in\set{\alpha_1,\dots,\alpha_\mu}\mid\alpha<1+w_0+w_1}\ar[leftrightarrow]{r}{1:1} & S/\ideal{d},\\[-20pt]
\alpha\ar[mapsto]{r} & d\alpha-p-q,\\[-20pt]
\frac kd+w_0+w_1 & k\ar[mapsto]{l}.
\end{tikzcd}
\end{equation}
The smallest spectral number $w_0+w_1$ corresponds to $0\in S$, and the gap $\mu-1$ of $S$ defining the Gorenstein symmetry \eqref{25} corresponds to the non-spectral number $1$.
It follows that \eqref{24} can be written explicitly as
\begin{equation}\label{27}
p_g=\abs{\set{(i,j)\in\mathbb{N}^2\mid\frac{i+1}{p}+\frac{j+1}{q}<1}}=\abs{\NN\setminus S}=\frac\mu2.
\end{equation}
Under \eqref{25} the gap $1\in\NN\setminus S$ is the mirror of $\mu-2\in S$ and corresponds to the largest spectral number $1-w_0w_1<1$ by \eqref{26}.

%%%%%%%%%%%%%%%%%%%%%%%%%%%%%%%%%%%%%%%%%%%%%%%%%%%%%%%%%%%%%%%%%%%%%%%%%%%%%%%%

After these preparations we are ready to give the 

\begin{proof}[Proof of Proposition~\ref{18}]\
\begin{asparaenum}[(a)]

\item Using \eqref{28} and \eqref{29} we compute
\begin{align}\label{31}
\mu\Phi_f(w_0+w_1)&=\frac{\mu}{2}(w_0+w_1)^2-1=\frac{(p-1)(q-1)}{2}\left(\frac1p+\frac1q\right)^2-1\\\nonumber
&=\frac{(p-1)(q-1)(p+q)^2-2p^2q^2}{2p^2q^2}\\\nonumber
&=\frac{(pq-p-q+1)(p^2+2pq+q^2)-2p^2q^2}{2p^2q^2}\\\nonumber
&=\frac{2pq+p^3q-3p^2q-p^3+p^2+pq^3-3pq^2-q^3+q^2}{2p^2q^2}\\\nonumber
&=\frac{1}{pq}+\frac{pq-3q-p+1}{2q^2}+\frac{pq-3p-q+1}{2p^2},
\end{align}
which tends to $0$ for $p\to\infty$.
If $p\geq 4$ and $q\geq 5$, then \eqref{31} is positive since
\begin{align*}
pq-3q-p+1&=p(q-1)-3q+1\ge 4q-4-3q+1\geq q-3>0,\\
pq-3p-q+1&=p(q-3)-q+1\ge 4q-12-q+1\geq 3q-11>0.
\end{align*}
If $p=3$, then \eqref{31} becomes
\[
\frac{1}{3q}-\frac{2}{2q^2}+\frac{2q-8}{18}=\frac{2q^3-8q^2+6q-18}{18q^2},
\]
which is positive if $q\geq 4$.
Finally, if $p=2$, then \eqref{31} becomes
\[
\frac{1}{2q}-\frac{q+1}{2q^2}+\frac{q-5}{8}=\frac{q^3-5q^2-4}{8q^2},
\]
which is positive if $q\geq 6$, but negative if $q\in\set{3,5}$.

\item Using \eqref{27} and \eqref{28} we compute
\[
\Phi_f(1-w_0w_1)=\frac12(1-w_0w_1)^2-\frac12=-\frac1{2d^2}=-\frac{1}{2p^2q^2}<0,
\]
which tends to $0$ for $p\to\infty$.

\end{asparaenum}
\end{proof}

%%%%%%%%%%%%%%%%%%%%%%%%%%%%%%%%%%%%%%%%%%%%%%%%%%%%%%%%%%%%%%%%%%%%%%%%%%%%%%%%

Consider now an irreducible plane curve singularity $C=f^{-1}(0)$ with arbitrary number $g$ of Puiseux pairs. 
To prepare the proof of Theorem~\ref{34}, we review some standard integer invariants (see \cite[Ch.~II,\,\S1-3]{Zar06}):
Let
\[
\ol\beta_0<\ol\beta_1<\dots<\ol\beta_g
\]
denote the minimal generators of the value semigroup of $C$ and set
\begin{equation}\label{40}
e_i:=\gcd(\ol\beta_0,\ol\beta_1,\dots,\ol\beta_{i}),\quad
n_i:=\frac{e_{i-1}}{e_i},\quad
q_i:=\frac{\ol\beta_i}{e_{i}}
\end{equation}
for $i=0,\dots,g$.
These greatest common divisors form a strictly decreasing sequence
\begin{equation}\label{36}
\ol\beta_0=e_0>e_1>\cdots>e_g=1.
\end{equation}
Moreover, the minimal generators of the value semigroup satisfy inequalities
\begin{equation}\label{41}
n_{i-1}\ol\beta_{i-1}<\ol\beta_i
\end{equation}
for $i=1,\dots,g$.
The \emph{characteristic Puiseux exponents} of $C$ are defined recursively by
\begin{equation}\label{39}
\beta_0:=\ol\beta_0,\quad
\beta_1:=\ol\beta_1,\quad
\beta_i:=\ol\beta_i-n_{i-1}\ol\beta_{i-1}+\beta_{i-1},
\end{equation}
for $i=2,\dots,g$.
By \eqref{41} they form a strictly increasing sequence
\begin{equation}\label{32}
1\le\beta_0<\beta_1<\cdots<\beta_g.
\end{equation}
The Milnor number of $f$ can be written as (see \cite[Ch.II,\,\S3,\,(3.14)]{Zar06})
\begin{equation}\label{23}
\mu=\sum_{i=1}^g\beta_i(e_{i-1}-e_i)-\beta_0+1.
\end{equation}
On the other hand, A'Campo showed that (see \cite[Thm.~3.(ii)]{ACa73})
\begin{equation}\label{38}
\mu=\sum_{i=1}^ge_i\mu_i,\quad \mu_i:=(n_i-1)(q_i-1).
\end{equation}

%%%%%%%%%%%%%%%%%%%%%%%%%%%%%%%%%%%%%%%%%%%%%%%%%%%%%%%%%%%%%%%%%%%%%%%%%%%%%%%%

\begin{proof}[Proof of Theorem~\ref{34}]
The case where $g=1$ is covered by Proposition~\ref{18}.\eqref{18a}.
Using \eqref{36} and \eqref{32}, we find a lower bound
\begin{align}\label{37}
\mu&=-\beta_ge_g+\sum_{i=1}^{g-1}(\beta_{i+1}-\beta_{i})e_i+\beta_1e_0-\beta_0+1\\\nonumber
&\ge-\beta_ge_g+\beta_1e_0-\beta_0+1\\\nonumber
&=-\beta_g+\beta_0(\beta_1-1)+1\\\nonumber
&>-\beta_g+\beta_0\beta_1-\beta_0.
\end{align}
Suppose first that $g\ge3$.
Using \eqref{23} and \eqref{37}, we compute
\begin{align*}
(\beta_0+\beta_1)^2\mu-2\beta_0^2\beta_1^2
&=(\beta_0^2+2\beta_0\beta_1+\beta_1^2)\mu-2\beta_0^2\beta_1^2\\
&>\sum_{i=1}^g\beta_0^2\beta_i(e_{i-1}-e_i)-\beta_0^3+\beta_0^2\\
&+\sum_{i=1}^g\beta_1^2\beta_i(e_{i-1}-e_i)-\beta_0\beta_1^2+\beta_1^2\\
&-2\beta_0\beta_1\beta_g-2\beta_0^2\beta_1\\
&>\sum_{i=1}^{g-1}\beta_0^2\beta_i
+\sum_{i=1}^{g-1}\beta_1^2\beta_i
-\beta_0^3-\beta_0\beta_1^2\\
&+(\beta_0-\beta_1)^2\beta_g-2\beta_0^2\beta_1\\
&>2\beta_0^2\beta_1+2\beta_1^3-\beta_0^3-\beta_0\beta_1^2-2\beta_0^2\beta_1>0.
\end{align*}
It follows that
\[
\Phi_f\left(\frac1{\ol\beta_0}+\frac1{\ol\beta_1}\right)=
\frac{1}{2}\left(\frac1{\ol\beta_0}+\frac1{\ol\beta_1}\right)^2-\frac1\mu=\frac{(\beta_{0}+\beta_1)^2\mu-2\beta_0^2\beta_1^2}{2 \beta_0^2\beta_1^2\mu}>0.
\]

%%%%%%%%%%%%%%%%%%%%%%%%%%%%%%%%%%%%%%%%%%%%%%%%%%%%%%%%%%%%%%%%%%%%%%%%%%%%%%%%

Suppose now that $g=2$.
By \eqref{38}, \eqref{40}, \eqref{36} and \eqref{41}, 
\[
e_1\mu_1=(n_1-1)(\ol\beta_1-e_1)=n_1\ol\beta_1-\ol\beta_1-e_0 +e_1<n_1\ol\beta_1\le\ol\beta_2-1=q_2-1
\]
and hence
\begin{align}\label{42}
\mu-e_1^2\mu_1&=e_1\mu_1+e_2\mu_2-e_1^2\mu_1\\\nonumber
&>e_1(1+e_2(n_2-1)-e_1)\mu_1\\\nonumber
&=e_1(1+e_1-e_2-e_1)\mu_1\\\nonumber
&=e_1(1-e_2)\mu_1=0.
\end{align}
If $(n_1,q_1)\notin\set{(2,3),(2,5)}$, then by \eqref{40}, Proposition \ref{18} and \eqref{42}
\[
\Phi_f\left(\frac1{\ol\beta_0}+\frac1{\ol\beta_1}\right)=
\frac{1}{e_1^2}\frac{1}{2}\left(\frac{1}{n_1}+\frac{1}{q_1}\right)^2-\frac1\mu>\frac{1}{e_1^2}\frac{1}{\mu_1}-\frac1\mu=\frac{\mu-e_1^2\mu_1}{e_1^2\mu_1\mu}>0.
\]
Otherwise, we have $n_1=2\le e_1=n_2$ and \eqref{41} yields $q_2>2e_1q_1$.
Using \eqref{38} it follows that
\begin{align*}
\mu&=(e_1-1)(q_2-1)+e_1(q_1-1)\\
&>(e_1-1)(2e_1q_1-1)+e_1(q_1-1)\\
&=2e_1q_1(e_1-1)+e_1(q_1-2)+1\\
&=2q_1e_1^2-(q_1+2)e_1+1
\end{align*}
and hence 
\[
(2+q_1)^2\mu-8e_1^2q_1^2=
\begin{cases}
78e^2_1-125e_1+25 & \text{if }q=3,\\
290e^2_1-343e_1+49 & \text{if }q=5.
\end{cases}
\]
In both cases $e_1\ge2$ implies
\[
\Phi_f\left(\frac1{\ol\beta_0}+\frac1{\ol\beta_1}\right)
=\frac{(n_1+q_1)^2\mu-2e_1^2n_1^2q_1^2}{2e_1^2n_1^2q_1^2\mu}
=\frac{(2+q_1)^2\mu-8e_1^2q_1^2}{8e_1^2q_1^2\mu}>0,
\]
which tends to $0$ for $n_g\rightarrow\infty$ since this entails $e_1\rightarrow\infty$ by \eqref{36} and $\mu\rightarrow\infty$ by \eqref{38}.
\end{proof}

%%%%%%%%%%%%%%%%%%%%%%%%%%%%%%%%%%%%%%%%%%%%%%%%%%%%%%%%%%%%%%%%%%%%%%%%%%%%%%%%
%%%%%%%%%%%%%%%%%%%%%%%%%%%%%%%%%%%%%%%%%%%%%%%%%%%%%%%%%%%%%%%%%%%%%%%%%%%%%%%%
\section{Limit spectral distribution}\label{33}
%%%%%%%%%%%%%%%%%%%%%%%%%%%%%%%%%%%%%%%%%%%%%%%%%%%%%%%%%%%%%%%%%%%%%%%%%%%%%%%%
%%%%%%%%%%%%%%%%%%%%%%%%%%%%%%%%%%%%%%%%%%%%%%%%%%%%%%%%%%%%%%%%%%%%%%%%%%%%%%%%

In this section we return to the general setup of \S\ref{19} and prove our main result Theorem~\ref{0}.
Our approach is to subdivide the Newton diagram and mimic an argument of K.~Saito (see \cite[(2.2),(3.7)]{Sai81}).
We begin with his motivating

%%%%%%%%%%%%%%%%%%%%%%%%%%%%%%%%%%%%%%%%%%%%%%%%%%%%%%%%%%%%%%%%%%%%%%%%%%%%%%%%

\begin{exa}[Brieskorn--Pham type singularities]\label{5}
Suppose first that $n=0$ and $f=f(z)=z^d$ is quasihomogeneous of degree $1$ with respect to the weight $w=1/d$ on $z$ with Milnor number $\mu=d-1$.
By \eqref{6}, $H=\ideal{z}\subset\CC\{z\}/\ideal{z^d}$ and hence
\[
p_{H_f}(t)=\frac{t(t^{w-1}-1)}{1-t^w}=\frac{t-t^w}{t^w-1}.
\]
By Theorem~\ref{3}, using L'H\^opital's rule in the second step,
\begin{align}\label{10}
\lim_{w\to0}\chi_f(t)&=\lim_{w\to0}\frac{p_H(T)}{\mu}\\\nonumber
&=\lim_{w\to0}\frac{w}{1-w}\frac{\exp(2\pi it)-\exp(2\pi itw)}{\exp(2\pi itw)-1}\\\nonumber
&=\lim_{w\to0}\frac{\exp(2\pi it)-\exp(2\pi itw)-2\pi itw\exp(2\pi itw)}{1-\exp(2\pi itw)+2\pi it(1-w)\exp(2\pi itw)}\\\nonumber
&=\frac{\exp(2\pi it)-1}{2\pi it}
=\frac{\exp(\pi it)}{\pi t}\frac{\exp(\pi it)-\exp(-\pi it)}{2 i}\\\nonumber
&=\frac{\exp(\pi it)}{\pi t}\sin(\pi t)=\F(\varphi)(t).
\end{align}
Consider now $f=f_0+\cdots+f_n$, where $f_j=f_j(z_j)=z_j^{d_j}$, which is quasihomogeneous of degree $1$ with respect to weights $w_0=1/d_0,\dots,w_n=1/d_n$ on the variables $z_0,\dots,z_n$ with Milnor number $\mu=\mu_f=\prod_{j=0}^n\mu_{f_j}$.
Then $H=H_f=H_{f_0}\otimes_\CC\cdots\otimes_\CC H_{f_n}$ and hence, by the first part and \eqref{7}, 
\[
\lim_{w_0,\dots,w_n\to0}\chi_f(t)=\prod_{i=0}^n\lim_{w_i\to0}\chi_{f_i}(t)=\F(\varphi)^{n+1}(t)=\F(N_{n+1})(t).
\] 
In this sense the normalized spectrum converges in distribution to the continuous probability distribution $N_{n+1}$.
\end{exa}

%%%%%%%%%%%%%%%%%%%%%%%%%%%%%%%%%%%%%%%%%%%%%%%%%%%%%%%%%%%%%%%%%%%%%%%%%%%%%%%%

For our purpose we adapt the calculation \eqref{10} as follows.

\begin{lem}\label{9}
$\lim_{w\to0}w\frac{1-T}{1-T^w}=\F(\varphi)(t)$.
\end{lem}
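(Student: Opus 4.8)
The plan is to recognize that $w\frac{1-T}{1-T^w}$ with $T=\exp(2\pi it)$ is, up to the simple factor $\frac{1-w}{1}$, exactly the expression $\chi_{z^d}(t)=\frac{p_{H_f}(T)}{\mu}$ computed in Example~\ref{5} for $w=1/d$, and then to run the same L'H\^opital argument directly. Concretely, substitute $T=\exp(2\pi it)$ and write
\[
w\,\frac{1-\exp(2\pi it)}{1-\exp(2\pi itw)}
=\frac{w\bigl(\exp(2\pi it)-1\bigr)}{\exp(2\pi itw)-1}.
\]
As $w\to0$ both numerator and denominator tend to $0$, so I would apply L'H\^opital's rule with respect to $w$: the derivative of the numerator is $\exp(2\pi it)-1$ (treating $t$ as fixed, the factor $w$ differentiates to $1$ and the parenthesis is constant in $w$), and the derivative of the denominator is $2\pi it\exp(2\pi itw)$, which tends to $2\pi it$ as $w\to0$. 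Hence the limit equals $\frac{\exp(2\pi it)-1}{2\pi it}$.

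It then remains to identify $\frac{\exp(2\pi it)-1}{2\pi it}$ with $\F(\varphi)(t)$. This is precisely the last two lines of the chain \eqref{10}: factor out $\exp(\pi it)$ to get $\frac{\exp(\pi it)}{\pi t}\cdot\frac{\exp(\pi it)-\exp(-\pi it)}{2i}=\frac{\exp(\pi it)}{\pi t}\sin(\pi t)$, which is by definition $\F(\varphi)(t)$ since $\varphi$ is the indicator function of $[0,1]$. So I would simply cite that computation rather than redo it.

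There is essentially no obstacle here; the only point requiring a word of care is the case $t=0$, where the expression $w\frac{1-T}{1-T^w}$ is the indeterminate $\frac{0}{0}$ for every $w$ and separately the stated limit $\F(\varphi)(0)=1$ must be checked — but this follows either by continuity of $\F(\varphi)$ or by noting that for $t=0$ one has $T=1$ and $w\frac{1-T}{1-T^w}$ should be read as its limiting value $1$, consistent with $\F(\varphi)(0)=\int_0^1 dx=1$. Alternatively one observes that the whole identity is an equality of entire (or at least continuous) functions of $t$ in the limit, so it suffices to verify it for $t\neq0$ and invoke continuity. I expect the referee-level subtlety, if any, to be only this bookkeeping about which variable the limit and L'H\^opital are taken in (namely $w$, with $t$ held fixed), and everything else is immediate from Example~\ref{5}.
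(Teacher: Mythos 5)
Your proof is correct and follows the paper's own argument essentially verbatim: L'H\^opital's rule in the variable $w$ (with $t$ fixed) gives $\frac{\exp(2\pi it)-1}{2\pi it}$, which is identified with $\F(\varphi)(t)$ by citing the computation \eqref{10}. The remark about $t=0$ is fine but not something the paper bothers with.
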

\begin{proof}
Using L'H\^opital's rule in the second step and \eqref{10}, we compute
\begin{align*}
\lim_{w\to0}w\frac{1-T}{1-T^w}
&=\lim_{w\to0}\frac{w\cdot(1-\exp(2\pi it))}{1-\exp(2\pi itw)}\\
&=\lim_{w\to0}\frac{1-\exp(2\pi it)}{-2\pi it\exp(2\pi itw)}\\
&=\frac{1-\exp(2\pi it)}{-2\pi it}=\F(\varphi)(t).\qedhere
\end{align*}
\end{proof}

%%%%%%%%%%%%%%%%%%%%%%%%%%%%%%%%%%%%%%%%%%%%%%%%%%%%%%%%%%%%%%%%%%%%%%%%%%%%%%%%

For the subdivision of the Newton diagram we rely on the following general result.
The basis of a rational pointed cone $\sigma$ are the irreducible integral vectors $\alpha_0,\dots,\alpha_k$ on its rays.
If it extends to a lattice basis, then $\sigma$ is called \emph{regular}.
In this case $\sigma$ is a simplicial cone and the convex hull of $\set{0,\alpha_0,\dots,\alpha_k}$ has $k$-dimensional volume $1$ (see \cite[\S1.1]{Kho83}).
A rational fan is called \emph{regular} if all its cones are regular. 
Varchenko~\cite[Thm.~1, Remark]{Kho83} pointed out the following 

\begin{thm}\label{8}
Any finite rational fan has a regular subdivision.\qed
\end{thm}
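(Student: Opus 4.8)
The plan is to deduce the statement from two standard ingredients: the existence of a simplicial refinement of any fan, and the reduction of the \emph{multiplicity} of a simplicial cone by stellar (star) subdivisions. Throughout, $N$ denotes the lattice in which $\Sigma$ lives. For a simplicial cone $\sigma$ with primitive ray generators $v_1,\dots,v_k$ I would write $N_\sigma:=N\cap\RR\sigma$ and set $\mathrm{mult}(\sigma):=[N_\sigma:\ZZ v_1+\dots+\ZZ v_k]$; thus $\sigma$ is regular exactly when $\mathrm{mult}(\sigma)=1$, and the aim is a refinement in which every cone has multiplicity $1$. Two elementary facts enter repeatedly. First, if $\tau\le\sigma$ then $\mathrm{mult}(\tau)\mid\mathrm{mult}(\sigma)$: pick a $\ZZ$-basis of $N_\sigma$ adapted to the saturated sublattice $N_\tau$ and compute a block-triangular determinant. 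Second, the half-open parallelepiped $P_\sigma:=\{\,\sum_i t_iv_i\mid 0\le t_i<1\,\}$ contains exactly $\mathrm{mult}(\sigma)$ lattice points of $N_\sigma$, hence a nonzero one whenever $\mathrm{mult}(\sigma)>1$.

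First I would reduce to $\Sigma$ simplicial. This is classical: after fixing an ordering of the finitely many rays of $\Sigma$, refine each cone by the induced pulling triangulation, recursively coning its smallest vertex over the triangulated facet opposite to it; these triangulations agree on shared faces, so they patch to a simplicial refinement of $\Sigma$ using no new rays. (Alternatively, iterate stellar subdivision at a ray of a non-simplicial cone.) So we may assume $\Sigma$ simplicial.

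Then I would carry out the main induction. Put $M:=M(\Sigma):=\max_{\sigma\in\Sigma}\mathrm{mult}(\sigma)$ and $c:=c(\Sigma):=\#\{\sigma\in\Sigma\mid\mathrm{mult}(\sigma)=M\}$, and induct on $(M,c)$ ordered lexicographically. If $M=1$ we are done. Otherwise pick $\sigma\in\Sigma$ with $\mathrm{mult}(\sigma)=M$, choose a nonzero lattice point $w\in P_\sigma$, and let $v$ be the primitive generator of the ray $\RR_{\ge0}w$, so $v=\sum_i t_iv_i$ with all $t_i\in[0,1)$ and $I:=\{i\mid t_i>0\}\ne\emptyset$; thus $v$ lies in the relative interior of the face $\rho:=\mathrm{cone}(v_i\mid i\in I)$ of $\sigma$, which is a cone of $\Sigma$. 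The stellar subdivision $\Sigma^{*}$ of $\Sigma$ at the ray $\RR_{\ge0}v$ deletes every cone of $\Sigma$ containing $\rho$ (in particular $\sigma$) and, for each such cone $\gamma$, inserts the cones $\RR_{\ge0}v+\delta$ with $\delta\le\gamma$ and $\rho\not\le\delta$. A maximal new cone $\gamma'$ arises from such a $\gamma$ by replacing one generator $v_{i_0}$ (with $i_0\in I$) by $v$; expanding the determinant of the generator matrix with respect to a basis of $N_\gamma=N_{\gamma'}$ gives $\mathrm{mult}(\gamma')=t_{i_0}\,\mathrm{mult}(\gamma)<\mathrm{mult}(\gamma)\le M$, and every other new cone, being a face of some $\gamma'$, has multiplicity $<M$ by the divisibility fact. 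Cones of $\Sigma$ that survive in $\Sigma^{*}$ retain their multiplicities $\le M$. Hence $M(\Sigma^{*})\le M$, and if $M(\Sigma^{*})=M$ then the mult-$M$ cones of $\Sigma^{*}$ form a proper subset of those of $\Sigma$ (they no longer include $\sigma$), so $c(\Sigma^{*})<c$. Since $(M,c)$ ranges in $\ZZ_{\ge0}\times\ZZ_{\ge0}$ with the well-founded lexicographic order, finitely many stellar subdivisions produce a refinement with $M=1$, i.e.\ a regular subdivision of $\Sigma$.

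The part I expect to be the main obstacle is the multiplicity bookkeeping in the induction: one must guarantee that a stellar subdivision never creates a cone of multiplicity $\ge M$ and that at least one cone realizing $M$ disappears at each step. This is exactly why one inserts the \emph{primitive} vector $v$, which forces all $t_i<1$ and hence $\mathrm{mult}(\gamma')<\mathrm{mult}(\gamma)$, and does so along the minimal face $\rho$ whose relative interior contains it, rather than at an arbitrary lattice point of $P_\sigma$; it is also why the divisibility $\mathrm{mult}(\tau)\mid\mathrm{mult}(\sigma)$ for faces, routine as it is, is needed to control the many lower-dimensional cones the subdivision creates. The simplicial reduction, by contrast, is standard and causes no difficulty.
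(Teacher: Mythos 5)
Your proposal is correct, and it supplies an actual proof where the paper gives none: in the text, Theorem~\ref{8} is stated with a \qed and attributed to Varchenko via the reference \cite[Thm.~1, Remark]{Kho83}, so the comparison is between your argument and a citation. What you wrote is the standard toric desingularization argument (as in Kempf--Knudsen--Mumford--Saint-Donat, or Fulton \S2.6): first pass to a simplicial refinement without adding rays, then iterate stellar subdivision at a primitive lattice vector $v$ chosen in the half-open fundamental parallelepiped of a cone of maximal multiplicity, using the identity $\mathrm{mult}(\gamma')=t_{i_0}\,\mathrm{mult}(\gamma)$ and the divisibility of multiplicities along faces to see that the lexicographic invariant $(M,c)$ strictly drops. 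The bookkeeping is done correctly: choosing $v$ primitive keeps all coefficients $t_i$ in $[0,1)$ (since $v=w/k$ with $w$ in the parallelepiped), subdividing along the minimal face $\rho$ containing $v$ guarantees that every cone of multiplicity $M$ that is destroyed is not replaced by new cones of multiplicity $\ge M$, and surviving cones are untouched, so termination follows from well-foundedness of the lexicographic order. The only part left at the level of a sketch is the reduction to a simplicial fan (pulling triangulations compatible on shared faces, or star subdivision at all existing rays); this is indeed classical and unproblematic, though in a fully self-contained write-up you would want to state the compatibility-on-faces property of the pulling triangulation explicitly. Net effect: your route makes the theorem self-contained at the cost of about a page of standard toric combinatorics, whereas the paper simply outsources it to the literature.
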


%%%%%%%%%%%%%%%%%%%%%%%%%%%%%%%%%%%%%%%%%%%%%%%%%%%%%%%%%%%%%%%%%%%%%%%%%%%%%%%%

Finally, we make use of Kouchnirenko's formula~\cite[Thm. I]{Kou76} for the Milnor number of Newton non-degenerate singularities in terms of volumes.

\begin{thm}[Kouchnirenko]\label{4}
The Milnor number of any Newton non-degenerate $f$ in $n+1$ variables can be written as
\[
\mu=(n+1)!V_{n+1}-n!V_{n}+\cdots+1!(-1)^{n}V_1+(-1)^{n+1},
\]
where $V_k$ is the sum of $k$-dimensional volumes of the intersection of the convex hull of $\Gamma\cup\set0$ with the $k$-dimensional coordinate planes.\qed
\end{thm}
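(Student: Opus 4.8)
The plan is to compute the Milnor number from its algebraic description $\mu=\dim_\CC\O/J_f$, where $J_f=\ideal{\partial f/\partial z_0,\dots,\partial f/\partial z_n}$ is the Jacobian ideal, and to evaluate this dimension combinatorially by passing to the Newton graded ring and organizing the count over the faces of $\Gamma$. Since $0$ is an isolated critical point, the partials form a regular sequence, so $\O/J_f$ is a finite-dimensional $\CC$-vector space whose dimension is unchanged by passing to the associated graded with respect to the Newton filtration, $\mu=\dim_\CC\gr_\N(\O/J_f)$. The role of Newton non-degeneracy is to guarantee that taking $\gr_\N$ commutes with forming the Jacobian ideal; concretely, in the notation of \eqref{6} it yields $\gr_\N\Omega_f\cong A/\ideal{F_0,\dots,F_n}=H_f$, so that $\mu=\dim_\CC H_f=p_{H_f}(1)$, reducing the problem to computing the specialization at $t=1$ of the Poincaré series of $H_f$.

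Next I would decompose $A=\gr_\N\O$ as the direct sum, over the faces $\sigma\in\Gamma$ together with the region strictly above $\Gamma$, of the graded subspaces $A_\sigma^\circ$ spanned by monomials whose Newton weight is attained in the relative interior of the cone $\QQ_{\ge0}\sigma$. Correspondingly $\dim_\CC H_f$ breaks into face-by-face contributions. On a fixed face $\sigma$, non-degeneracy of $f$ along $\sigma$ says precisely that the restricted principal parts $F_{0,\sigma},\dots,F_{n,\sigma}$ have no common zero on the torus $(\CC^*)^{d(\sigma)}$, so the Bernstein--Kouchnirenko theorem applied in that torus shows that the corresponding graded piece of $A_\sigma/\ideal{F_{0,\sigma},\dots,F_{n,\sigma}}$ is finite-dimensional of dimension a normalized volume attached to $\sigma$. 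Assembling these contributions with the inclusion--exclusion signs $(-1)^{n+1-d(\sigma)}$ and the combinatorial factors governed by $k(\sigma)$ that appear in Theorem~\ref{3}, and specializing at $t=1$, a telescoping (Möbius) argument over the face poset of $\Gamma$ collapses the sum to the alternating combination of the volumes $V_k$ of the coordinate sections of the convex hull of $\Gamma\cup\set0$, which is exactly the stated formula.

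An alternative, more geometric route avoids the graded bookkeeping: all Newton non-degenerate convenient germs with Newton diagram $\Gamma$ form a connected family with constant $\mu$ (semicontinuity of $\mu$ together with the a priori bound $\mu\le\nu(\Gamma)$ valid for every convenient germ with that diagram), so one may compute $\mu$ for a single representative adapted to a \emph{regular} subdivision $\Sigma$ of the dual fan, which exists by Theorem~\ref{8}. The induced toric modification $\pi\colon X_\Sigma\to\CC^{n+1}$ produces an embedded resolution of $f^{-1}(0)$ in which the Milnor fibre is a union, indexed by the rays of $\Sigma$ meeting $\Gamma$, of pieces fibred over subtori; computing its Euler characteristic by additivity gives $1+(-1)^{n+1}\mu$ on one side and the alternating sum of the $V_k$ on the other, since on each toric stratum the relevant count is again a Bernstein normalized volume. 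Either way the numerical identity to land on is $(n+1)!V_{n+1}-n!V_n+\cdots+(-1)^{n}1!\,V_1+(-1)^{n+1}$.

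The main obstacle, in both approaches, is the non-degeneracy input: one must show that along every face $\sigma$ the restricted Jacobian system is generic enough for the Bernstein--Kouchnirenko count to be exact, i.e.\ that no common zeros on lower torus strata create excess intersection, equivalently that $\gr_\N$ commutes with forming the Jacobian ideal. This is the only step that genuinely uses the hypothesis; the remainder is the lengthy but mechanical inclusion--exclusion over the face poset of $\Gamma$, together with the elementary verification that the convex-hull section volumes $V_k$ agree with the sums of the truncated face-cone volumes entering that bookkeeping.
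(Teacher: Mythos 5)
The paper gives no proof of this statement: it is Kouchnirenko's Th\'eor\`eme I, quoted from \cite{Kou76} with an immediate end-of-proof mark, so there is no argument of the authors' own to compare yours against. On its own terms, your outline correctly names the two standard architectures (Kouchnirenko's algebraic route through the Newton-graded ring, and the toric/Euler-characteristic route), and correctly locates where non-degeneracy must enter. But in both routes the step you yourself call ``the main obstacle'' is not merely left as an exercise --- it is asserted via arguments that do not go through as stated.

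In the algebraic route, the claim that ``the Bernstein--Kouchnirenko theorem applied in that torus'' gives the dimension of the graded pieces of $A_\sigma/\ideal{F_{0,\sigma},\dots,F_{n,\sigma}}$ is a misapplication: BKK counts common torus zeros of a square system, whereas here you have $n+1$ forms on a $d(\sigma)$-dimensional cone with \emph{no} common torus zero, and what is needed is the Poincar\'e series of a quotient of the affine semigroup ring $A_\sigma$. Converting ``no common zero on the torus'' into ``the $F_{i,\sigma}$ act like a regular sequence, so the quotient has Poincar\'e series $(1-t)^{d(\sigma)}p_{A_\sigma}(t)$ up to boundary corrections'' requires Hochster's Cohen--Macaulayness of $A_\sigma$ together with a Koszul-complex acyclicity argument; this is Kouchnirenko's Th\'eor\`eme 2.8 and is the actual content of the theorem, not a citable black box. (If instead you take Theorem~\ref{3} as given and let $t\to1$ --- which is essentially what your first paragraph does, and which does produce the volume formula by the same limit computation as in the proof of Theorem~\ref{0} --- you obtain only a derivation from Steenbrink's formula, whose proof already rests on Th\'eor\`eme 2.8, not an independent proof.) In the geometric route, your a priori bound is stated in the wrong direction: Kouchnirenko proves $\mu\ge\nu(\Gamma)$ for every convenient germ, not $\mu\le\nu(\Gamma)$; and even with the correct inequality, upper semicontinuity of $\mu$ only makes the locus $\set{\mu=\nu(\Gamma)}$ open in the non-degenerate stratum, not closed, so connectedness alone does not yield constancy of $\mu$ there. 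That step needs a genuine $\mu$-constancy or topological-triviality argument, which your sketch does not supply.
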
 

%%%%%%%%%%%%%%%%%%%%%%%%%%%%%%%%%%%%%%%%%%%%%%%%%%%%%%%%%%%%%%%%%%%%%%%%%%%%%%%%

We are now ready for the 

\begin{proof}[Proof of Theorem~\ref{0}]
By Theorem~\ref{8}, $\Gamma$ has a subdivision $\tilde\Gamma$ corresponding to a regular subdivision of its fan of cones.
For any $\tau\in\tilde\Gamma$ let $w^\tau_0,\dots,w^\tau_k$ be the weights of the basis of $\QQ_{\geq 0}\tau\cap \ZZ$.
Then 
\[
p_{A_\sigma}(t)=\sum_{\tilde\Gamma\ni\tau\le\sigma}(-1)^{d(\sigma)-d(\tau)}p_{A_\tau}(t),\quad
p_{A_{\tau}}(t)=\prod_{j=0}^{\dim\tau}\frac1{(1-t^{w^\tau_j})}.
\]
Substituting into Steenbrink's formula from Theorem~\ref{3} yields
\[
p_{H_f}(t)=\sum_{\tilde\Gamma\ni\tau\le\sigma\in\Gamma}(-1)^{n+1-d(\tau)}\frac{(1-t)^{k(\sigma)}}{\prod_{j=0}^{\dim\tau}(1-t^{w^\tau_j})}.
\]
Passing to $\varpi\Gamma$, $w_j^\tau$ is preplaced by $\varepsilon w_j^\tau$ where $\varepsilon\varpi=1$ and hence
\[
p_{H_{f_\varpi}}(t)=\sum_{\tilde\Gamma\ni\tau\le\sigma\in\Gamma}(-1)^{n+1-d(\tau)}\frac{(1-t)^{k(\sigma)}}{\prod_{j=0}^{\dim\tau}(1-t^{\varepsilon w^\tau_j})}.
\]
By Theorems~\ref{1} and \ref{4}, 
\begin{align}
\label{12}\lim_{\varpi\to\infty}\chi_{f_\varpi}(t)&=\lim_{\varpi\to\infty}\frac{p_{H_{f_\omega}}(T)}{\mu_{f_\varpi}}\\\nonumber
&=\sum_{\tilde\Gamma\ni\tau\le\sigma\in\Gamma}(-1)^{n+1-d(\tau)}\lim_{\varpi\to\infty}\frac{1}{\mu_{f_\varpi}}\frac{(1-t)^{k(\sigma)}}{\prod_{j=0}^{\dim\tau}(1-t^{\varepsilon w^\tau_j})},\\
\label{13}\mu_{f_\varpi}&=\sum_{j=0}^{n+1}(-1)^{n+1-j}j!\varpi^jV_j.
\end{align}
Fix $\tilde\Gamma\ni\tau\le\sigma\in\Gamma$.
Let $V(\tau)$ be the $d(\tau)$-dimensional volume of the convex hull of $\tau\cup\set0$.
Note that 
\begin{align}
\label{14}\sum_{\substack{\tau\in\tilde\Gamma\\d(\tau)=n+1}}V(\tau)&=V_{n+1},\\
\label{15}1/V(\tau)&=d(\tau)!\prod_{j=0}^{d(\tau)}w^\tau_j.
\end{align}
The summand in \eqref{12} indexed by $\tau$ is then computed using \eqref{15}, Lemma~\ref{9} and \eqref{13}:
\begin{align*}
&\phantom{=}\lim_{\varpi\to\infty}\frac1{\mu_{f_\varpi}}\frac{(1-T)^{k(\sigma)}}{\prod_{j=0}^{\dim\tau}(1-T^{\varepsilon w^\tau_j})}\\
&=\lim_{\varpi\to\infty}\frac{\prod_{j=0}^{\dim\tau}\frac{\varpi}{w^\tau_j}}{\mu_{f_\varpi}}
(1-T)^{k(\sigma)-d(\tau)}
\prod_{j=0}^{\dim\tau}\lim_{\varepsilon\to0}\varepsilon w^\tau_j\frac{1-T}{1-T^{\varepsilon w^\tau_j}}\\
&=\lim_{\varpi\to\infty}\frac{d(\tau)!V(\tau)\varpi^{d(\tau)}}{\mu_{f_\varpi}}
(1-T)^{k(\sigma)-d(\tau)}
\left(\lim_{w\to0}w\frac{1-T}{1-T^w}\right)^{d(\tau)}\\
&=\lim_{\varpi\to\infty}\frac{d(\tau)!V(\tau)\varpi^{d(\tau)}}{\mu_{f_\varpi}}
(1-T)^{k(\sigma)-d(\tau)}
\F(N_{d(\tau)})(t)\\
&=\begin{cases}
\frac{V(\tau)}{V_{n+1}}\F(N_{n+1})(t) & \text{if }d(\tau)=n+1,\\
0 & \text{if }d(\tau)<n+1.
\end{cases}
\end{align*}
The claim now follows by substituting into \eqref{12} and applying \eqref{14}.
\end{proof}
%%%%%%%%%%%%%%%%%%%%%%%%%%%%%%%%%%%%%%%%%%%%%%%%%%%%%%%%%%%%%%%%%%%%%%%%%%%%%%%%
\printbibliography
%%%%%%%%%%%%%%%%%%%%%%%%%%%%%%%%%%%%%%%%%%%%%%%%%%%%%%%%%%%%%%%%%%%%%%%%%%%%%%%%
\end{document}